\definecolor{dkgreen}{rgb}{0,0.6,0}
\definecolor{gray}{rgb}{0.5,0.5,0.5}
\definecolor{mauve}{rgb}{0.58,0,0.82}
\tiny\color{gray},
\newcommand{\skipitems}[1]{%
	\addtocounter{\@enumctr}{#1}%
}
\DeclareMathOperator{\Res}{Res}
\theoremstyle{plain}
\newtheorem{theorem}{Theorem}[section]
\newtheorem*{theorem*}{Theorem}
\newtheorem{corollary}[theorem]{Corollary}
\newtheorem{lemma}[theorem]{Lemma}
\theoremstyle{definition}
\newtheorem{definition}[theorem]{Definition}
\newtheorem{notation}[theorem]{Notation}
\newtheorem{proposition}[theorem]{Proposition}
\theoremstyle{plain}
\newtheorem{example}{Example}[section]
\theoremstyle{definition}
\newtheorem{remark}{Remark}[section]
\renewcommand{\phi}{\varphi}
\renewcommand{\theta}{\vartheta}
\renewcommand{\epsilon}{\varepsilon}
\renewcommand{\to}[1][]{\xrightarrow{\ #1\ }}
\newcommand{\old}[1]{}
\newtheorem{claim}{Claim}[theorem]
\title{An algorithm for the non-identifiability of rank-3 tensors}
\date{}
\author[]{Pierpaola Santarsiero}
\address{
	Pierpaola Santarsiero\\
	Universität Leipzig-MPI-MiS Leipzig \\ Germany
}
\email{pierpaola.santarsiero@mis.mpg.de \vspace{.3cm} }
\address{Edoardo Ballico\\ University of Trento \\ Italy}
\email{edoardo.ballico@unitn.it}
\address{Alessandra Bernardi\\ University of Trento \\ Italy}
\email{alessandra.bernardi@unitn.it}
\begin{document}
	\subjclass[2020]{14N07, 15A69}
	
\begin{abstract}
We present an algorithm aimed to recognize if a given tensor is a  non-identifiable rank-3 tensor. 
\end{abstract}

	\maketitle
\begin{center}
	(With an appendix together with E. Ballico and A. Bernardi)
\end{center}

\section{Introduction}
	Over the last 60 years multilinear algebra made its way in the applied sciences. As a consequence, tensors acquired an increasingly central role in the applications and the problem of tensor rank decomposition has started to be studied by several non-mathematical communities (cf. e.g. \cite{AMR}, \cite{Kolda}, \cite{APRS}, \cite{BCquantum}, \cite{RS}, \cite{Rcit}, \cite{AetAl},  \cite{BBCMM}, \cite{BClibro}). 
	
	Fix $\mathbb{C}$-vector spaces $V_1,\dots,V_k$ of dimensions $n_1,\dots,n_k$ respectively. A tensor $T\in V_1\otimes \cdots \otimes V_k$ is called \emph{elementary} if $T=v_1\otimes \cdots \otimes v_k$ for some $v_i\in V_i$ with $i=1,\dots,k$. Elementary tensors are the building blocks of the tensor rank decomposition and the \emph{rank} $r(T)$ of a tensor $T\in V_1\otimes \cdots \otimes V_k$ is the minimum integer $r$ such that we can write $T$ as a combination of $r$ elementary tensors:
	$$
	T=\sum_{i=1}^{r}  v_{1,i}\otimes \cdots \otimes v_{k,i}, \hbox{ where all }v_{j,i}\in V_j \hbox{ for }j=1,\dots,k. 
	$$
	A rank-$r$ tensor $T $ is \emph{identifiable} if admits a unique rank decomposition up to reordering the elementary tensors and up to scalar multiplication. Remark that since the notion of rank does not depend on scalar multiplication, it is well defined for projective classes of tensors too.
	
 The first modern contribution on identifiability of tensors has been given by J. B. Kruskal \cite{K77} and, starting from Kruskal's result, over the years there have been many contributions on the identifiability problem (cf. e.g. \cite{CC2}, \cite{CO}, \cite{Bocci2011}, \cite{BCO}, \cite{COV14}, \cite{COV17}, \cite{BBC}, \cite{MMS}, \cite{GM19}, \cite{HOOShomotopy}, \cite{CM21}, \cite{CM22}, \cite{LMR22}).
	In particular, working in the applied fields, one may also be interested in the identifiability of specific tensors. Indeed, when translating an applied problem into the language of tensors one may be forced to deal with a very specific tensor that has a precise structure by reasons related to the nature of the applied problem itself. Working with specific tensors, the literature review becomes more scattered and most of the results can be considered extensions or generalizations of Kruskal's result (cf. \cite{Boralevi}, \cite{DD1}, \cite{DL14}, \cite{SDL15}, \cite{BV} and \cite{Lovitz}).
	
	The first complete classification on the identifiability problem appeared in \cite{BBS} where, together with E. Ballico and A. Bernardi, we completely characterize all identifiable tensors of rank either 2 or 3. The classification is based on the classical Concision Lemma (cf. \cite[Prop. 3.1.3.1]{Lands} and also Subsection \ref{Subsec:Concision} below) and, in particular, for  $r=2$ it has been proved that the only non-identifiable rank-$2$ tensors are $2\times 2$ matrices (cf. \cite[Proposition 2.3]{BBS}). A more interesting situation occurs for the rank-3 case, where there have been found $6$ different families of non-identifiable concise rank-3 tensors (cf. \cite[Theorem 7.1]{BBS}). 
	
	In this manuscript we present an algorithm aimed to recognize if a given tensor falls into one of the $6$ families above mentioned or not.
	
	The paper is organized as follows. Section \ref{Sec:Prel} is devoted to recollect basic notions needed to develop the algorithm. We start by recalling \cite[Theorem 7.1]{BBS} and explaining each case of the classification working in coordinates. In Subsection \ref{Subsec:Concision} we recall the coordinate description of the concision process for a tensor while Subsection \ref{Subsec:Pencil} is devoted to review basic facts on matrix pencils.
	In Section \ref{Sec:Algoritmo} is presented the algorithm itself. In particular, Subsection \ref{Subsec:3fattori} focuses on the 3-factors case, while Subsection \ref{Subsec:+3fattori} considers the general case of $k\geq 4$ factors.
	
	We end the manuscript with an appendix written together with E. Ballico and A. Bernardi in which we fix an imprecision in the statement of \cite[Proposition 3.10]{BBS} and consequently in an item in \cite[Theorem 7.1]{BBS}. In the following, if needed, we will refer to the correct statement of \cite[Proposition 3.10 and Theorem 7.1]{BBS} given in the forthcoming Proposition \ref{prop:new statement} and Theorem \ref{newMAINTHM} respectively.
	

	\section{Preliminary notions}\label{Sec:Prel}
		In the following we will work with tensors over $\mathbb{C}$.

		\begin{notation}
			Fix $k$ vector spaces $V_1,\dots,V_k$ of dimension $n_1+1,\dots,n_k+1$ respectively and let $N=\prod_{i=1}^k (n_i+1)-1$. By $\nu$ we denote the Segre embedding 
				\begin{align*}
				\nu: \mathbb{P}V_1\times \cdots \times \mathbb{P}V_k &\longrightarrow  \mathbb{P}(V_1 \otimes \cdots \otimes V_k)=\mathbb{P}^N\\
				([v_1], \ldots  , [v_k])&\mapsto [v_1 \otimes \cdots \otimes v_k]
			\end{align*}
	When dealing with complex projective spaces we will denote by $X_{n_1,\dots,n_k}=\nu(Y_{n_1,\dots,n_k})$ the Segre variety of the multiprojective space $Y_{n_1,\dots,n_k}=\mathbb{P}^{n_1}\times \cdots \times \mathbb{P}^{n_k}$. 
	\end{notation}
	We recall that the $r$-th \emph{secant variety} $\sigma_k(X_{n_1,\dots,n_k})$ of a Segre variety $X_{n_1,\dots,n_k}\subset \mathbb{P}^N$ is defined as
	$$
	\sigma_r(X_{n_1,\dots,n_k})=\overline{\{q\in \mathbb{P}^N \colon r(q)=r\}}.
	$$
The variety $X_{n_1,
	\dots ,n_k}$ is said to be \emph{r-defective} if $$\dim (\sigma_r(X_{n_1,\dots,n_k}))< \min \{ r(\dim X_{n_1,\dots,n_k}+1)-1,N  \}.$$

	 Since the algorithm we are going to present is based on the classification \cite[Theorem 7.1]{BBS}, we briefly recall it here in the revised version of our Theorem \ref{newMAINTHM}. 
	\smallskip

	\paragraph*{\textbf{The classification}}{\emph{\cite[Theorem 7.1 revised]{BBS}-Theorem \ref{newMAINTHM} in the present paper.}}\label{main_theorem} A concise rank-3 tensor $T\in \mathbb{C}^{n_1}\otimes \cdots \otimes \mathbb{C}^{n_k}  $ is identifiable except if $T$ is in one of the following families.
	 \begin{enumerate} [label=\alph*)]
	 	\item \label{teorcaso1} \textbf{[Matrix case]} The first trivial example of non-identifiable rank-3 tensors are $3 \times 3 $ matrices, which is a very classical case.
	 	\item\label{teorcaso2} \textbf{[Tangential case]} The \emph{tangential variety} of a variety is the tangent developable of the variety itself. A point $q$ essentially lying on the tangential variety of the Segre $X_{1,1,1} $ is actually a point of the tangent space $T_{[p]}X_{1,1,1}$ for some $p=u\otimes v \otimes w\in (\mathbb{C}^2)^{\otimes 3}$. Therefore there exists some $a,b,c\in \mathbb{C}^2 $ such that $T$ can be written as 
	 	\begin{align*}
	 		T=a\otimes v\otimes w+u\otimes b \otimes w+u\otimes v \otimes c
	 	\end{align*}
	 	and hence $q$ is actually non-identifiable.
	 	\item\label{teorcaso3} \textbf{[Defective case]}  We recall that the third secant variety of a Segre variety $X_{n_1,\dots,n_k}$ is defective if and only if $(n_1,\dots ,n_k)=(1,1,1,1),(1,1,a)$ with $a\geq 3$ (cf. \cite[Theorem 4.5]{AOP}). We will see that the latter case will not play a role in the discussion and hence we can focus on the case $k=4$. 
	 	By defectivity, the dimension of  $\sigma_3(X_{1,1,1,1})$ is strictly smaller than the expected dimension and this proves that the generic element of $\sigma_3(X_{1,1,1,1})$ has an infinite number of rank-3 decompositions and therefore all the rank-3 tensor of this variety have an infinite number of decompositions. 
	 	\item[d),e)]\textbf{[Conic cases]} In this case one works with the Segre variety $X_{2,1,1}$ given by the image of a projective plane and two projective lines. 
	 	
	 	Let $Y_{2,1,1}=\mathbb{P}^2\times \mathbb{P}^1\times \mathbb{P}^1$. Consider the Segre variety $X_{1,1} \subset \mathbb{P}^3$ given by the last two factors of $Y_{2,1,1}$ and take a hyperplane section which intersects $X_{1,1}$ in a conic $\mathcal{C }$. Let $L_{\mathcal{C}}$ be the Segre given by the product of the first factor $\mathbb{P}^2$ of $Y_{2,1,1}$ and the conic $\mathcal{C}$, therefore $L_{\mathcal{C}}\subset X_{2,1,1}$. The family of non-identifiable rank-3 tensors are points lying in the span of $L_{\mathcal{C}}$.
	 	In this case, the non-identifiability comes from the fact that the points on $\langle \mathcal{C} \rangle $ are not identifiable and the distinction between the two cases reflects the fact that the conic $\mathcal{C}$ can be either irreducible or reducible. The distinction between the two cases can be expressed as follows working in coordinates:
	 	\begin{enumerate}[label=\alph*),start=4]
	 		\item\label{teorcaso4} The non-identifiable tensor $T\in\mathbb{C}^3\otimes \mathbb{C}^2\otimes \mathbb{C}^2$ and there exists a basis $\{u_1,u_2,u_3\}\subset\mathbb{C}^3 $ and a basis $\{ v_1,v_2 \} \subset\mathbb{C}^2 $ such that $ T$ can be written as 
	 		\begin{align*}
	 			T= u_1 \otimes v_1^2+u_2\otimes v_2^2 + u_3 \otimes (\alpha v_1+\beta v_2)^2,
	 		\end{align*}
	 		for some $\alpha,\beta\neq 0 $;
	 		\item\label{teorcaso5}The non-identifiable tensor $T\in\mathbb{C}^3\otimes \mathbb{C}^2\otimes \mathbb{C}^2$ and there exists a basis  $\{u_1,u_2,u_3\}\subset\mathbb{C}^3 $ and a basis $\{ v_1,v_2 \} \subset \mathbb{C}^2 $ such that $ T$ can be written as  
	 		\begin{align*}
	 			T= u_1 \otimes v_1\otimes \tilde{p}+u_2\otimes v_2 \otimes \tilde{p}+ u_3\otimes \tilde{q} \otimes w,  
	 		\end{align*}
	 		for some $\tilde{q}\in \langle v_1,v_2 \rangle $, where $\tilde{p},w \in \mathbb{C}^2$ must be linearly independent;
	 		\end{enumerate}
 		\skipitems{2}
	 	\item\label{teorcaso6} \textbf{[General case]} The last family of non-identifiable rank-3 tensors relates the Segre variety $X_{n_1,n_2,1^{k-2}}$ that is the image of the multiprojective space $Y_{n_1,n_2,1^{k-2}}=\mathbb{P}^{n_1}\times \mathbb{P}^{n_2}\times (\mathbb{P}^1)^{(k-2)}$, where either $k\geq 4$ and $n_1,n_2\in \{1,2\}$ or $k=3$ and $(n_1,n_2,n_3)\neq (2,1,1)$.
	 	The non-identifiable rank-3 tensors of this case are as follows.
	 	Let $ Y':=\mathbb{P}^1\times \mathbb{P}^1\times \{ u_3\} \times \cdots \times \{ u_k\}$ be a proper subset of $Y_{n_1,n_2,1^{k-2}} $, take $q'$ in the span of the Segre image of $Y'$ with the constrain that $q'$ is not an elementary tensor. Therefore $q'$ is a non-identifiable tensor of rank-2 since it can be seen as a $2\times 2$ matrix of rank-2. Let $p\in X_{n_1,n_2,1^{k-2}}$ be a rank-1 tensor taken outside the Segre image of $Y'$. Now any point $q \in \langle  \{q',p \}  \rangle \setminus \{q' , p\}  $ is a rank-3 tensor (cf. Proposition \ref{prop:new statement}) and it is not identifiable since $q'$ has an infinite number of decompositions and each of these decompositions can be taken by considering $p$ together with a decomposition of $q'$.
\smallskip	 	

	 	 For a coordinate description of this case, we take $T \in \mathbb{C}^{m_1}\otimes \mathbb{C}^{m_2}\otimes (\mathbb{C}^2)^{k-2} $, where $k\geq 3$, $m_1,m_2\in \{2,3 \} $ such that $m_1+m_2+ (k-2)\geq 4$. Moreover there exist distinct $a_1,a_2\in \mathbb{C}^{m_1}$, distinct $b_1,b_2\in \mathbb{C}^{m_2}$ and for all $i\geq 3$ there exists a basis $ \{u_i,\tilde{u}_i\}$ of the $i$-th factor such that
	 	$ T$ can be written as
	 	\begin{align*}
	 		T=& (a_1\otimes b_1+a_2\otimes b_2)\otimes u_3 \otimes \cdots \otimes u_k + a_3 \otimes b_3\otimes \tilde{u}_3 \otimes \cdots \otimes \tilde{u}_k,
	 	\end{align*}    
	 	where if $m_1=2$ then $a_3\in \langle  a_1,a_2\rangle$ otherwise $ a_1,a_2,a_3$ are linearly independent. Similarly, if $ m_2=2$ then $ b_3 \in \langle b_1,b_2\rangle$, otherwise $ b_1,b_2,b_3$ form a basis of the second factor.
	 	\end{enumerate}

For a more detailed overview of the next couple of sections we refer to \cite{phdtesi}.

\subsection{Concision}\label{Subsec:Concision}

Fix a tensor $T\in \mathbb{C}^{n_1}\otimes \cdots \otimes \mathbb{C}^{n_k}$, where $ k\geq 2$ and $n_1,\dots,n_k\geq 1 $. For all $ \ell=1,\dots, k$, denote by $\mathcal{B}_\ell=\{e^\ell_1,\dots,e^\ell_{n_\ell}\} $ an ordered basis of $\mathbb{C}^{n_\ell} $ and by $\mathcal{B}^*_\ell=\{ \eta^\ell_1,\dots,\eta^\ell_{n_\ell} \} $ the corresponding dual basis. Let $T= (t_{i_1,i_2,\cdots, i_k})$ be the coordinates of $T$ with respect to those bases. 


A useful operation that allows to store the elements of a tensor as a matrix is the flattening (cf. \cite[Section 3.4]{Lands}), also called matrix-unfolding of a tensor in \cite[Definition 1]{flat}, which is the oldest reference we found for a formal definition of this operation.
\begin{definition}\label{flattening} The $\ell$-th \emph{flattening} of a tensor $ T\in \mathbb{C}^{n_1}\otimes \cdots \otimes \mathbb{C}^{n_k}$ whose coordinates in the canonical basis $\{e_{i_1}^1 \otimes \cdots \otimes e_{i_k}^k\}$ are $t_{i_1,\dots,i_k}$ is the linear map 
	\begin{align*}
		\varphi_\ell\colon &( \mathbb{C}^{n_1}\otimes \cdots \otimes \mathbb{C}^{n_{\ell-1}}\otimes \mathbb{C}^{n_{\ell+1}}\otimes \cdots \otimes \mathbb{C}^{n_k})^*  \rightarrow\mathbb{C}^{n_\ell}\\
		f&\mapsto \sum_{i_1,\dots,i_k} t_{i_1\dots i_k} f(e^1_{i_1}\otimes \cdots \otimes e^{\ell-1}_{i_{\ell-1 }}\otimes e^{\ell+1}_{i_{\ell+1}}\cdots \otimes e^{k}_{i_k}) e^\ell_{i_\ell} .
	\end{align*}
	We denote by $T_\ell $ the $ n_\ell \times (\prod_{i\neq \ell} n_i) $ associated matrix with respect to bases $\mathcal{B}_\ell $ and $\{\eta^1_1\otimes\cdots \otimes \eta^{\ell-1}_1\otimes \eta^{\ell+1}_1 \otimes \cdots \otimes \eta^k_1,\eta^1_1\otimes\cdots \otimes \eta^{\ell-1}_1\otimes \eta^{\ell+1}_1 \otimes \cdots \otimes \eta^k_2,\dots ,\eta^1_{n_1}\otimes\cdots \otimes \eta^{\ell-1}_{n_{\ell-1}}\otimes \eta^{\ell+1}_{n_{\ell+1}} \otimes \cdots \otimes \eta^k_{n_k}  \} $. 
\end{definition}

\begin{definition}[\protect{\cite{hit1928}}]
	Let $T\in \mathbb{C}^{n_1}\otimes \cdots \otimes \mathbb{C}^{n_k}$.
	For all $\ell=1,\dots,k$ let $T_\ell$ be the $\ell$-th flattening of $T$ as in Definition \ref{flattening} and denote by $r_\ell:=r(T_\ell) $. The \emph{multilinear rank} of $ T$ is the $k$-uple $$mr(T):=(r_1,\dots,r_k) $$ containing the ranks of all the flattenings of $T$.
\end{definition}

We remark that (cf. \cite[Theorem 7]{CK}) for all $ \ell=1,\dots,k$ 
\begin{equation}\label{mrank}
	r_\ell \leq r(T) \leq \prod_{i\neq \ell}r_i\end{equation}
and moreover it is classically known that
\begin{equation*}
	r(T)=1 \iff \textit{ the multilinear rank of }T \textit{ is } (1,\dots,1).  
\end{equation*} 

We are now ready to recall the concision process for a tensor. The following Lemma is the base step also for the algorithm we are going to construct in order to test the possible identifiability of a given tensor $T$. 
\begin{lemma}[\protect{Concision/Autarky, \cite[Prop. 3.1.3.1]{Lands}, \cite[Lemma 2.4]{BB3sec}}]\label{concision}
	For any $T \in \mathbb{C}^{n_1}\otimes \cdots \otimes \mathbb{C}^{n_k} $ one can uniquely determine minimal integers $ k'\leq k$ and $n'_1,\dots, n'_{k'}$ with $n'_i\leq n_i $ such that 
	\begin{itemize}
	\item $ T\in \mathbb{C}^{n'_1}\otimes \cdots \otimes \mathbb{C}^{n'_{k'}}\subseteq \mathbb{C}^{n_1}\otimes \cdots \otimes \mathbb{C}^{n_k} $;
	\item the rank of $T $ as an element of $\mathbb{C}^{n_1}\otimes \cdots \otimes \mathbb{C}^{n_k}  $ is the same as the rank of $ T$ as an element of $\mathbb{C}^{n'_1}\otimes \cdots \otimes \mathbb{C}^{n'_{k'}} $;
	\item any rank decomposition of $ T$ can be found in $\mathbb{C}^{n'_1}\otimes \cdots \otimes \mathbb{C}^{n'_{k'}} $.
	\end{itemize}
	We denote by $\mathcal{T}_{n'_1,\dots,n'_{k'}}:=\mathbb{C}^{n'_1}\otimes \cdots \otimes \mathbb{C}^{n'_{k'}} $ and we will call it the concise tensor space of $ T$.
\end{lemma}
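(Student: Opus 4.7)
My plan is to use the flattenings of $T$ to construct canonical minimal subspaces, and then reduce the number of factors by absorbing the one-dimensional ones.

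For each $\ell = 1, \dots, k$, define $V'_\ell := \operatorname{Image}(\varphi_\ell) \subseteq \mathbb{C}^{n_\ell}$, where $\varphi_\ell$ is the $\ell$-th flattening from Definition \ref{flattening}; this is a subspace of dimension $r_\ell$, the $\ell$-th multilinear rank of $T$. Since $V'_\ell$ is intrinsically attached to $T$, uniqueness is built in. I would first verify that $T \in V'_1 \otimes \cdots \otimes V'_k$: by construction every ``$\ell$-slice'' of $T$ (obtained by pairing the other $k-1$ factors against a dual vector) lies in $V'_\ell$, so $T$ belongs to $\mathbb{C}^{n_1} \otimes \cdots \otimes V'_\ell \otimes \cdots \otimes \mathbb{C}^{n_k}$ for each $\ell$ separately, and intersecting these containments yields $T \in V'_1 \otimes \cdots \otimes V'_k$. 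Minimality of the tuple $(V'_1, \dots, V'_k)$ then follows from the same slice reasoning: whenever $T \in A_1 \otimes \cdots \otimes A_k$ with $A_\ell \subseteq \mathbb{C}^{n_\ell}$, every $\ell$-slice of $T$ must lie in $A_\ell$, forcing $V'_\ell \subseteq A_\ell$.

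Next I would establish rank preservation via a simultaneous projection argument. Any rank decomposition in the smaller space is trivially also a decomposition in the ambient space, so the rank in the smaller space is at least the rank in the ambient space. Conversely, given a rank decomposition $T = \sum_{i=1}^r v_{1,i} \otimes \cdots \otimes v_{k,i}$, choose complements $\mathbb{C}^{n_\ell} = V'_\ell \oplus W_\ell$ with associated projections $P_\ell : \mathbb{C}^{n_\ell} \to V'_\ell$. Because $T \in V'_1 \otimes \cdots \otimes V'_k$, the composite $P_1 \otimes \cdots \otimes P_k$ fixes $T$, so
\[
T \;=\; \sum_{i=1}^r P_1(v_{1,i}) \otimes \cdots \otimes P_k(v_{k,i})
\]
is a decomposition of rank at most $r$ lying entirely in $V'_1 \otimes \cdots \otimes V'_k$; minimality of $r$ forces equality. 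The third bullet then comes along for free: the projected decomposition above realizes an arbitrary rank decomposition of $T$ inside $V'_1 \otimes \cdots \otimes V'_k$ at the same rank.

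Finally, I would compress the number of factors: whenever $\dim V'_\ell = 1$, say $V'_\ell = \langle u_\ell \rangle$, the tensor splits as $T = u_\ell \otimes T'$ with $T' \in \bigotimes_{j \neq \ell} V'_j$, and the $\ell$-th slot can be absorbed without changing the rank. Iterating this collapse and relabeling produces integers $k' \leq k$ and dimensions $n'_i = \dim V'_{\ell_i} \geq 2$ that satisfy all three conclusions. The main technical point is the simultaneous projection step: one has to verify that applying $P_1 \otimes \cdots \otimes P_k$ genuinely returns $T$, which is precisely the content of the containment $T \in V'_1 \otimes \cdots \otimes V'_k$ established in the first paragraph, and it is the minimality of $r$ that prevents the projection from collapsing any rank-one summand to zero.
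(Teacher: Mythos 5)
Your construction of the canonical subspaces $V'_\ell = \operatorname{Im}(\varphi_\ell)$, the containment $T \in V'_1 \otimes \cdots \otimes V'_k$, the minimality of each $V'_\ell$, the projection argument for rank preservation, and the absorption of one-dimensional factors are all correct, and this is the standard route (the paper itself gives no proof of this lemma, quoting it from \cite[Prop.~3.1.3.1]{Lands} and \cite[Lemma 2.4]{BB3sec}). The genuine gap is in the third bullet. What the lemma asserts --- and what the paper actually needs, both in the appendix where ``autarky'' is invoked to force every solution into the minimal multiprojective space, and implicitly throughout the algorithm, which decides (non-)identifiability by inspecting only the concise space --- is that \emph{every} rank decomposition of $T$ already lies in $V'_1 \otimes \cdots \otimes V'_k$. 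Your argument only produces, from an arbitrary rank decomposition, a \emph{new} decomposition $\sum_i P_1(v_{1,i}) \otimes \cdots \otimes P_k(v_{k,i})$ living in the small space; it says nothing about the original vectors $v_{\ell,i}$. Noting that minimality of $r$ prevents your one fixed projection from killing a summand does not close this: a vector $v_{\ell,i} \notin V'_\ell$ can perfectly well have nonzero image under a projection whose kernel happens not to contain it. If decompositions could live outside the concise space, restricting the identifiability test to the concise space would be unjustified, so this is not a cosmetic point.

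The repair is short but requires letting the complement depend on the offending vector: suppose some $v_{\ell,i} \notin V'_\ell$. Choose the complement $W_\ell$ of $V'_\ell$ so that $v_{\ell,i} \in W_\ell$ (possible precisely because $v_{\ell,i} \notin V'_\ell$), and let $P_\ell$ be the projection onto $V'_\ell$ along $W_\ell$. Applying $\operatorname{id} \otimes \cdots \otimes P_\ell \otimes \cdots \otimes \operatorname{id}$ fixes $T$ but annihilates the $i$-th summand, leaving a decomposition of $T$ into at most $r-1$ nonzero elementary tensors and contradicting $r = r(T)$. Hence every $v_{\ell,i} \in V'_\ell$, which is the autarky statement. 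With this insertion (and the harmless remark that when $\dim V'_\ell = 1$ every minimal decomposition uses the same vector $u_\ell$ in that slot, so the property survives the relabelling to $k'$ factors), your proof is complete.
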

The lemma states that for any tensor $T\in \mathbb{C}^{n_1}\otimes \cdots \otimes \mathbb{C}^{n_k} $ there exists a unique minimal tensor space included in $\mathbb{C}^{n_1}\otimes \cdots \otimes \mathbb{C}^{n_k} $ that contains both the tensor and all its possible rank decompositions. Let us review more in details a procedure that computes the concise tensor space $ \mathcal{T}_{n'_1,\dots,n'_{k'}}$ of a given tensor $ T\in \mathbb{C}^{n_1}\otimes \cdots \otimes \mathbb{C}^{n_k}$ working in coordinates.
\smallskip

After having fixed basis of $\mathbb{C}^{n_1}\otimes \cdots \otimes \mathbb{C}^{n_k}$, let $T=(t_{i_1,\dots, i_k})\in \mathbb{C}^{n_1}\otimes \cdots \otimes \mathbb{C}^{n_k} $ be its coordinate representation , where all $n_i\geq 1 $ and $k\geq 2 $.
For all $\ell=1,\dots,k $ consider the $\ell$-{th}  flattening $T_\ell$ of $ T$ as in Definition \ref{flattening}. 
For the sake of simplicity take $\ell=1 $.
The first column of $ T_1$ is 
\begin{align*}
	(t_{1,1,\dots ,1},t_{2,1,\dots ,1},\dots,t_{n_1,1,\dots ,1 } )^T=\sum_{i=1}^{n_1}t_{i,1,\dots ,1}u^1_i=\sum_{i,j=1}^{n_1}t_{i,1,\dots, 1}\alpha^1_j (u^1_i),
\end{align*}
which is referred to $u^2_1\otimes \cdots \otimes u^k_1 $. The same holds for the other columns of $T_1 $. 
Once we have computed $n'_1:=r(T_1)$ we can extract $n'_1$ linearly independent columns from $ T_1$, say $u^1_1,\dots, u^1_{n'_1} $. Since $\hbox{Im}(\varphi_1)= \langle u^1_1,\dots,u^1_{n'_1} \rangle\cong \mathbb{C}^{n'_1}\subseteq \mathbb{C}^{n_1} $, we rewrite the other columns as a linear combination of the independent ones.
The resulting tensor $ T'$  will therefore live in a smaller space $\mathbb{C}^{n'_1}\otimes \mathbb{C}^{n_2} \otimes \cdots \otimes \mathbb{C}^{n_k}  $. By continuing this process for each flattening we arrive to the concise tensor space $$\mathcal{T}_{n'_1,\dots,n'_{k'}}=\mathbb{C}^{n'_1}\otimes \cdots \otimes \mathbb{C}^{n'_{k'}} $$ where we may assume $ n'_i>1$ for all $i=1,\dots,k' $ and $ k'\leq k$ since $ \mathbb{C}^{n'_1}\otimes \cdots \otimes \mathbb{C}^{n'_{k'}}\otimes \{u_1\} \otimes \cdots \otimes \{ u_{k-k'}\}\cong \mathbb{C}^{n'_1}\otimes \cdots \otimes \mathbb{C}^{n'_{k'}}$.

\subsection{Matrix pencils}\label{Subsec:Pencil}
In this subsection we review some basic facts on matrix pencils that will be useful for the construction of the algorithm. We will briefly describe how to achieve the Kronecker normal form of any matrix pencil and we refer to \cite[Vol. 1, Ch. XII]{GantV12} for a detailed exposition. 

For the rest of this subsection, unless specified, we will work over an arbitrary field $\mathbb{K}$ of characteristic $0$.
\smallskip

Fix integers $m,n>0$. A \emph{polynomial matrix} $A(\lambda)$  is a matrix whose entries are polynomials in $\lambda$, namely
$$A(\lambda)=(a_{i,j}(\lambda))_{i=1,\dots, m, j=1,\dots, n}, \hbox{ where } a_{i,j}(\lambda):=a^{(0)}_{i,j}+a^{(1)}_{i,j}\lambda +\cdots + a^{(l)}_{i,j}\lambda^l,$$
for some $l>0$. 
If we set $A_k:=(a_{i,j}^{(k)})$, then we can write $A(\lambda)$ as
$$A(\lambda)=A_0+\lambda A_1 +\cdots +\lambda^l A_l .$$ The \emph{rank} $r(A(\lambda))$ of $A(\lambda)$ is the positive integer $r$ such that all $r+1$ minors of $A(\lambda)$ are identically zero as polynomials in $\lambda$ and there exists at least one minor of size $r$ which is not identically zero. A \emph{matrix pencil} is a polynomial matrix of type $A(\lambda)=A_0+\lambda A_1$. Given two matrix pencils $A(\lambda)=A_0+\lambda A_1$ and $B(\lambda)=B_0+\lambda B_1$, we say that $A(\lambda)$ and $B(\lambda )$ are \emph{strictly equivalent} if there exist two invertible matrices $P,Q$ such that
$$ P(A_0+ \lambda A_1)Q=B_0+\lambda B_1.$$
We shall see that the Kronecker normal form of a matrix pencil is determined by a complete system of invariants with respect to the strict equivalence relation defined above.

 Any matrix pencil $A+ \lambda B$ of size $m\times n$ can be either regular or singular:
\begin{definition}\label{pencil reg e sing}
Let $A,B\in M_{m,n}(\mathbb{K})$.
A pencil of matrices $A+\lambda B$ is called \emph{regular} if 
\begin{enumerate}
\item both $A$ and $B$ are square matrices of the same order $m$;
\item the determinant $\det(A+\lambda B)$ does not vanish identically in $\lambda$.
\end{enumerate}
Otherwise the matrix pencil is called \emph{singular}.

\end{definition}
We now recall how to find the normal form of a pencil $A+\lambda B$ depending on whether it is regular or not.
\subsubsection{Normal form of regular pencils}
In the case of regular pencils, normal forms can be found by looking at the elementary divisors of a given matrix pencil. In order to introduce them, it is convenient to consider the pencil $A+\lambda  B$ with homogeneous parameters $\lambda , \mu$, i.e. $\mu A +\lambda  B$.

 Let $\mu  A  + \lambda  B$ be the rank $r$ homogeneous matrix pencil associated to $A+\lambda B$. For all $j=1,\dots,r$, denote by $D(\lambda,\mu)_j$ the greatest common divisor of all the minors of order $j$ in $\mu A+ \lambda B$ and set $D_0(\lambda,\mu)=1$.
Define the following polynomials
$$i_j(\lambda,\mu):=\frac{D_{r-j+1}(\lambda,\mu)}{D_{r-j}(\lambda,\mu)}, \hbox{ for all } j=1,\dots,r. $$
Note that all $i_j(\lambda,\mu)\in \mathbb{K}[\lambda,\mu]$ can be splitted into products of powers of irreducible homogeneous polynomials that we call \emph{elementary divisors}.
Elementary divisors of the form $\mu^q$ for some $q>0$ are called \emph{infinite elementary divisors}.


One can prove that two regular pencils $A+ \lambda B$ and $A_1+\lambda B_1$ are strictly equivalent if and only if they have the same elementary divisors and infinite elementary divisors (cf. \cite[Vol. 2, Ch. XII, Theorem 2]{GantV12}).
Therefore elementary divisors and infinite elementary divisors are invariant with respect to the strict equivalence relation. Moreover they form a complete system of invariants for the strict equivalence relation since they are irreducible elements with respect to the fixed field $\mathbb{K}$. This is the reason why the polynomials $i_j(\lambda,\mu)$ defined above are actually called \emph{invariant polynomials} for all $ j=1,\dots,r$.

We recall that the \emph{companion matrix} of a monic polynomial $g(\lambda)=a_0+a_1\lambda +\cdots +a_{n-1}\lambda^{n-1}+\lambda^{n}$ is 
$$L=\begin{bmatrix}
	0& 1 & 0 &\hdots & 0 \\ \vdots & \ddots& \ddots &\ddots & \vdots \\ 0 & \hdots &0 & 1&0\\ 0 &\hdots & \hdots &0 &1 \\ -a_0 & -a_1 &\hdots & \hdots& -a_{n-1}
	\end{bmatrix}.
$$

\begin{theorem}[\protect{\cite[Vol. 2, Ch. XII, Theorem 3]{GantV12}}]
Every regular pencil $A+\lambda B$ can be reduced to a (strictly equivalent) canonical block diagonal form of the following type 
$$ [N^{(u_1)};\dots; N^{(u_s)};J_{v_1};\dots ; J_{v_t};L_{w_1};\dots ; L_{w_p}],$$
where

\begin{itemize}
\item The first $s$ diagonal blocks are related to infinite elementary divisors $\mu^{u_1},\dots,\mu^{u_s}$ of the pencil $A+\lambda B$
and for all $i=1,\dots, s$
$$N^{(u_i)}=\begin{bmatrix} 1 & \lambda & & \\ & \ddots&\ddots \\ & & 1& \lambda \\& & & 1 \end{bmatrix} \in M_{u_i}(\mathbb{K}).$$
\item The blocks $J_{v_i}$ are the Jordan blocks related to elementary divisors of type $(\lambda -\lambda_i)^{v_i}$.
\item The last p diagonal blocks $L_{w_1},\dots ,L_{w_p}$ are the companion matrices associated to the remaining elementary divisors of $A+\lambda B$. 
\end{itemize}
\end{theorem}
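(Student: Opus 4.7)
The plan is to use regularity to split the pencil into a ``finite'' part and an ``infinite'' part, then invoke the rational/Jordan canonical form for a single matrix on the finite part and its nilpotent analogue on the infinite part.

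\emph{Step 1 (normalisation).} Since $\det(A+\lambda B)$ is a non-zero polynomial in $\lambda$ and $\mathbb{K}$ has characteristic zero (in particular is infinite), there exists $\lambda_0\in\mathbb{K}$ with $C:=A+\lambda_0 B$ invertible. Left-multiplying by $C^{-1}$ and substituting $\tilde\lambda:=\lambda-\lambda_0$ is a strict equivalence that reduces the pencil to the form $I_m+\tilde\lambda D$, with $D:=C^{-1}B$.

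\emph{Step 2 (Fitting splitting and the finite part).} Decompose $\mathbb{K}^m=V_0\oplus V_*$ as the Fitting decomposition of $D$, so that $D_0:=D|_{V_0}$ is nilpotent while $D_*:=D|_{V_*}$ is invertible; choosing a basis adapted to this splitting block-diagonalises the pencil via a strict equivalence. On $V_*$, left-multiplication by $D_*^{-1}$ converts $I+\tilde\lambda D_*$ into $D_*^{-1}+\tilde\lambda I$, after which the rational canonical form of $-D_*^{-1}$ over $\mathbb{K}$ decomposes it into cyclic pieces indexed by powers of irreducible polynomials in $\mathbb{K}[\lambda]$: linear powers $(\lambda-\lambda_i)^{v_i}$ yield the Jordan blocks $J_{v_i}$, while higher-degree irreducible powers yield the companion matrices $L_{w_j}$. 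Undoing the shift $\tilde\lambda\mapsto\lambda-\lambda_0$ only translates eigenvalues and preserves the block structure.

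\emph{Step 3 (the infinite part).} On $V_0$, the classical Jordan form for the nilpotent operator $D_0$ writes it as a direct sum of nilpotent Jordan blocks of sizes $u_1,\dots,u_s$. Each summand $I_{u_i}+\tilde\lambda N_{u_i}$ is, after one further strict equivalence (multiplication by $(I-\lambda_0 N_{u_i})^{-1}$ to absorb the shift), brought into the form $N^{(u_i)}$ displayed in the statement. Homogenising the resulting block gives determinant $\mu^{u_i}$ up to a unit, identifying $u_i$ with the exponent of an infinite elementary divisor of $\mu A+\lambda B$.

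The main obstacle I expect is the bookkeeping around the change of variable $\tilde\lambda=\lambda-\lambda_0$ and the identification of the Fitting null-space of $D=C^{-1}B$ with the infinite spectrum of the original pencil: namely, that the blocks of Step 2 exhaust the finite elementary divisors while those of Step 3 exhaust the infinite ones. The cleanest verification is to compute the greatest common divisors $D_j(\lambda,\mu)$ directly from the block-diagonal normal form, checking that each $N^{(u_i)}$ contributes exactly $\mu^{u_i}$, each $J_{v_i}$ contributes $(\lambda-\lambda_i)^{v_i}$, and each $L_{w_j}$ contributes its associated irreducible factor; combined with the strict-equivalence invariance of elementary divisors recalled earlier, this identifies the constructed blocks with the invariants promised in the theorem.
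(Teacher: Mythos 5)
The paper itself gives no proof of this theorem: it is quoted verbatim as a classical result from Gantmacher (Vol.~2, Ch.~XII, Theorem~3), so there is no internal argument to compare yours against. Your proposal is essentially the standard Weierstrass proof found in that reference --- normalise at a point $\lambda_0$ where the pencil is invertible, split $C^{-1}B$ by the Fitting decomposition into nilpotent and invertible summands, and apply the Jordan/rational canonical form to each --- and it is correct in outline. The one point that genuinely requires care, which you correctly flag, is that the substitution $\tilde\lambda=\lambda-\lambda_0$ is \emph{not} a strict equivalence (strict equivalence allows only constant invertible $P,Q$), so the shift must be undone at the level of each block: for the finite blocks this is an eigenvalue translation absorbed by a further similarity, and for each nilpotent block one multiplies by $(I-\lambda_0 N)^{-1}$ and then conjugates the resulting nilpotent back into Jordan form. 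Your concluding check --- computing the polynomials $D_j(\lambda,\mu)$ directly on the block-diagonal form and matching them against the strict-equivalence invariants --- is exactly the right way to certify that the blocks exhaust the finite and infinite elementary divisors, so I see no gap.
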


%
%

\subsubsection{Normal form of singular pencils}\label{sez pencil singolari}
In the previous case, a complete system of invariants was made by both elementary divisors and infinite ones. We shall see that, in case of singular pencils, this is not sufficient to determine a complete system of invariants with respect to the strict equivalence relation.
Fix $m\leq n$ and let $A+\lambda B$ be a singular pencil of rank $r$, where  $A,B\in M_{m,n}(\mathbb{K})$. Since the pencil is singular, the columns of $A+\lambda B$ are linearly dependent, therefore the system 
\begin{equation}\label{sist}
(A+\lambda B)x=0
\end{equation} has a non-zero solution with respect to $x$. Note that any solution $\tilde{x}$ of the above system is a vector whose entries are polynomials in $\lambda$, i.e. $\tilde{x}=\tilde{x}(\lambda)$. 
It has been proven in \cite[Vol. 2, Ch. XII, Theorem 4]{GantV12} that  if equation \eqref{sist} has a solution of minimal degree $\varepsilon \neq 0$ with respect to $\lambda$, the singular pencil $A+\lambda B$ is strictly equivalent to 
\begin{align*}
	\begin{bmatrix} L_{\varepsilon} & \\ & \hat{A}+\lambda \hat{B} \end{bmatrix},
\end{align*}
where 
\begin{align*}
	L_{\varepsilon}=\begin{bmatrix} \lambda & 1 &&\\ & \ddots & \ddots& \\  &  &\lambda &  1 \end{bmatrix} \in M_{\varepsilon,\varepsilon+1}(\mathbb{K}),
\end{align*}
and $\hat{A}+\lambda \hat{B}$ is a pencil of matrices for which the equation analogous to \eqref{sist} has no solution of degree less than $\varepsilon$.

By applying the previous result iteratively, a singular pencil $A+\lambda B$ is strictly equivalent to the block diagonal matrix $$[L_{\varepsilon_1} ;\dots;L_{\varepsilon_p};A_p+\lambda B_p],$$ where $0\neq \varepsilon_1\leq \cdots \leq \varepsilon_p$ and the last block is such that $(A_p+\lambda B_p)x=0$ has no non zero solution, i.e. the columns of $A_p+\lambda B_p$ are linearly independent. Then one looks at the rows of $A_p+\lambda B_p$. If these are linearly dependent, one can apply the same procedure just described by considering the associated system of the transposed pencil. 


Now let us treat the case in which there are some relations of degree zero (with respect to $\lambda$) between the rows and the columns of the given pencil $A+\lambda B$. Denote by $g$ and $h$ the maximal number of independent constant solutions of equations 
$$(A+\lambda B)x=0 \hbox{ and } (A^T+\lambda B^T)x=0 \hbox{ respectively}.$$ Let $e_1,\dots, e_g\in \mathbb{K}^n$ be linearly independent solutions of the system $(A+\lambda B)x=0$, completing them to a basis of $\mathbb{K}^n$ and rewriting the pencil with respect to this basis, we get $\tilde{A}+\lambda \tilde{B}= \begin{bmatrix} 0_{m\times g} & \tilde{A}_1+\lambda \tilde{B}_1\end{bmatrix}. $
One can do the same by taking $h$ linearly independent vectors that are solutions of the transpose pencil and hence the first $h$ rows of $\tilde{A}_1+\lambda \tilde{B}_1$ are zero with respect this new basis. Thus we obtain
$$ \begin{bmatrix}  0_{h\times g} & \\ & A_0+\lambda B_0  \end{bmatrix},$$
where $ A_0+\lambda B_0 $ does not have any degree zero relation, and hence either $A_0+\lambda B_0$ satisfies the assumptions of  \cite[Vol. 2, Ch. XII, Theorem 4]{GantV12}  or it is a regular pencil. 

There is a quicker way, due to Kronecker, to determine the canonical form of a given pencil, avoiding the iterative reduction just explained. It involves the notion of minimal indices. These last, together with elementary divisors (possibly infinite) will form a complete system of invariants for non singular pencils.\smallskip

Let $A+\lambda B$ be a non singular pencil and let $x_1(\lambda)$ be a non zero solution of least degree $\varepsilon_1$ for $(A+\lambda B)x=0$. Take $x_2(\lambda)$ as a solution of least degree $\varepsilon_2$ such that $x_2(\lambda)$ is linearly independent from $x_1(\lambda)$. Continuing this process, we get a so called \emph{fundamental series of solutions} of the system 
$$ x_1(\lambda),\dots, x_p(\lambda), \hbox{ of degrees } \varepsilon_1\leq \cdots \leq \varepsilon_p, \hbox{ for some } p\leq n.$$ 
We remark that a fundamental series of solution is not uniquely determined, but one can show that the degrees $\varepsilon_1,\dots, \varepsilon_p$ are the same for any fundamental series associated to a given system $(A+\lambda B)x=0$.
The \emph{minimal indices for the columns} of $A+\lambda B$ are the integers $\varepsilon_1,\dots,\varepsilon_p$. Similarly, the \emph{minimal indices for the rows} are the degrees $\eta_1,\dots, \eta_q$ of a fundamental series of solutions of $(A^T+\lambda B^T)x=0$.
Strictly equivalent pencils have the same minimal indices (cf. \cite[Vol. 2, Ch. XII, Sec. 5, Par. 2]{GantV12}).

Now let $A+\lambda B$ be a singular pencil and consider its normal form
\begin{equation}\label{forma pencil singolare}
\begin{bmatrix}
0_{h\times g}  & & & & & & &\\
 &L_{\varepsilon_{g+1}} & & & & & &\\
 & &\ddots & & & & &\\
 & & & L_{\varepsilon_{p}} & & & &\\
 & & & &L^T_{\eta_{h+1}} & & &\\
 & & & & & \ddots & &\\
 & & & & & & L^T_{\eta_{q}}&\\
 & & & & & & & A_0+\lambda B_0\\
\end{bmatrix}.\end{equation}
\begin{remark}
The system of indices for the columns (rows) of the above block diagonal matrix is obtained by taking the union of the corresponding system of minimal indices of the individual blocks.
\end{remark}
 We want to determine minimal indices for the above normal form \eqref{forma pencil singolare}.
By the previous remark, it is sufficient to determine the minimal indices for each block. Clearly the regular block $A_0+\lambda B_0$ has no minimal indices, the zero block $0_{h\times g}$ has $g$ minimal indices for columns and $h$ minimal indices for rows all equal to zero respectively, namely $\varepsilon_1=\cdots =\varepsilon_g=\eta_1=\cdots =\eta_h=0$.  The block $L_{\varepsilon_i}\in M_{\varepsilon_i,\varepsilon_i +1}(\mathbb{K})$ has linearly independent rows, therefore it has just one minimal index for column $\varepsilon_i$ for all $i=1,\dots, p$. Similarly, for all $j=1,\dots,q $ the block $L_{\eta_j}$ has just one minimal index for rows $\eta_{j}$.

We conclude that the canonical form \eqref{forma pencil singolare} is completely determined by both the minimal indices $\varepsilon_1,\dots,\varepsilon_p,$ $\eta_1,\dots,\eta_q$ and the elementary divisors. 

It is classically attributed to Kronecker the result proving that two arbitrary pencils $A+\lambda B$ and $A_1+\lambda B_1$ of rectangular matrices are strictly equivalent if and only if they have the same minimal indices and the same elementary divisors (possibly infinite).
We conclude this part by illustrating with an example how to construct the Kronecker normal form of a matrix pencil.

\begin{example}

Consider the pencil
$$
A+\lambda B=
\begin{bmatrix}
1 & 0&\lambda & 3\lambda +1 & 1 & 2\\
2\lambda & \lambda & \lambda & 3 & \lambda &0\\
0 &0&0& 1&1&1\\
2\lambda +1& \lambda & 2\lambda +1& 3\lambda +4 & \lambda +1 & 2
\end{bmatrix}.
$$
The kernel of the system $(A+\lambda B)x=0$ is generated by
$$ \mathrm{Ker} (A+\lambda B)= \langle  \begin{bmatrix} 1 \\1\\-3\\1\\0\\-1 \end{bmatrix}, \begin{bmatrix} 1\\-3\\0\\0\\1\\-1 \end{bmatrix} , \begin{bmatrix} -\lambda^2\\ 2\lambda^2-\lambda-1\\\lambda \\0\\ 0\\ 0 \end{bmatrix} \rangle.$$
Since the minimum integer of the non constant solution is $\varepsilon =2$, we know that the normal form of the pencil contains the following block $$L_{2}=\begin{bmatrix} 
\lambda & 1 &0 \\
0 & \lambda &1
\end{bmatrix}.$$
Moreover, we see that there are $g=2$ linearly independent constant solutions. Considering the transpose pencil, then $$\mathrm{Ker} ((A+\lambda B)^T)=\langle \begin{bmatrix} -1\\-1\\0\\1\end{bmatrix}\rangle,$$ so there is just one constant solution. Therefore, keeping the above notation, $\eta=0$ and $h=1$. Moreover the invariant polynomials of the pencil are $i_4(\lambda,\mu)=0$, $i_3(\lambda,\mu)=\mu$ and all the others are equal to $1$. Therefore the Kronecker normal form of $A+\lambda B$ is 
$$\left[
\begin{array}{c|c|c}
\begin{matrix}
0&0
\end{matrix} & & \\
\hline 
& \begin{matrix}
\lambda  &1 & 0 \\ 0 & \lambda & 1
\end{matrix} &\\
\hline
& & 1
\end{array}
\right].
$$
\end{example}

\subsubsection{3-factors tensor spaces and matrix pencils }\label{penciletensori}
From now on we work again over $\mathbb{C}$.
Any tensor $T\in \mathbb{C}^2\otimes \mathbb{C}^m\otimes \mathbb{C}^n$ can be seen as a matrix pencil via the isomorphism
\begin{align*}
\mathbb{C}^2\otimes (\mathbb{C}^m)^*\otimes (\mathbb{C}^n)^* &\xrightarrow{\sim} \{ \mathbb{C}^m\times \mathbb{C}^n\xrightarrow{\Phi} \mathbb{C}^2\} .
\end{align*}
We can easily pass from a tensor $T\in \mathbb{C}^2\otimes \mathbb{C}^m\otimes \mathbb{C}^n$ to its associated matrix pencil (and viceversa) by fixing a basis on each factor and looking at $ T$ in its coordinates with respect to the fixed bases. 
For example, let us fix the canonical basis on each factor and let $T=(t_{ijk})\in \mathbb{C}^2\otimes \mathbb{C}^m\otimes \mathbb{C}^n$. We can associate to $T$ the map 
\begin{align*}
\Phi_{T}:\mathbb{C}^m\times \mathbb{C}^n &\longrightarrow \mathbb{C}^2\\
(v,w) & \mapsto (v^TAw,v^TBw)
\end{align*}
where
$$A=(t_{1ij})_{i=1,\dots,m,j=1,\dots,n} \hbox{ and } B=(t_{2ij})_{i=1,\dots,m,j=1,\dots,n}. $$

Fixing the integer $m$ equal to either $2$ or $3$ in $\mathbb{C}^2\otimes \mathbb{C}^m\otimes \mathbb{C}^n$ leads us to consider very special tensor formats, namely $\mathbb{C}^2\otimes \mathbb{C}^2\otimes \mathbb{C}^n $ and $ \mathbb{C}^2\otimes \mathbb{C}^3\otimes \mathbb{C}^n$. In these cases there is a finite number of orbits with respect to the action of products of general linear groups (cf. \cite{K}). Such cases have been widely studied in \cite{Par}, where the author gave a complete orbit classification working in the affine setting. 

Remark that for any tensor belonging to either $\mathbb{C}^2\otimes \mathbb{C}^2\otimes \mathbb{C}^n$ or $\mathbb{C}^2\otimes \mathbb{C}^3\otimes \mathbb{C}^n$ one can consider the associated matrix pencil and, by computing its Kronecker normal form, it is possible to understand its rank. This last result comes from the following more general statement that is historically attributed to Grigoriev, J\'{a}J\'{a} and Teichert. We refer to \cite[Remark 5.4]{BL} for a historical note on the theorem.
\begin{theorem}[\protect{\cite{grigoriev}, \cite{jaja}, \cite{teichert}}]\label{jajaeco}
Let $T\in \mathbb{C}^2\otimes \mathbb{C}^m\otimes \mathbb{C}^n$ and let $A$ be the corresponding pencil with minimal indices $\varepsilon_1,\dots,\varepsilon_p,\eta_1,\dots,\eta_q$ and regular part $C=A_0+\lambda B_0$ of size $N$. Let $\delta(C)$ be the number of non-squarefree invariant polynomials of $C$. Then $T$ is a tensor of rank
\begin{align}\label{eqrangopencil}
 \sum_{i=1}^p (\varepsilon_i +1) + \sum_{i=1}^q (\eta_j +1) + N + \delta(C). 
\end{align}

\end{theorem}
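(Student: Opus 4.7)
The plan is to reduce $T$ to the Kronecker normal form of its associated pencil and then verify the rank formula block-by-block. To start, I would observe that strict equivalence of matrix pencils, $P(A_0+\lambda A_1)Q=B_0+\lambda B_1$, encodes precisely the action of $GL_m(\mathbb{C})\times GL_n(\mathbb{C})$ on the second and third factors of $\mathbb{C}^2\otimes \mathbb{C}^m\otimes \mathbb{C}^n$ via the isomorphism of Subsection~\ref{penciletensori}. Since this action preserves tensor rank, I may assume without loss of generality that $T$ corresponds to a pencil already in Kronecker normal form \eqref{forma pencil singolare}, namely a block diagonal pencil with blocks $L_{\varepsilon_i}$ ($i=1,\dots,p$), $L^T_{\eta_j}$ ($j=1,\dots,q$), and a regular part $C=A_0+\lambda B_0$ of size $N$.

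Next, I would establish that tensor rank is additive on this block-diagonal decomposition, i.e.
$$r(T)=\sum_{i=1}^{p} r(L_{\varepsilon_i})+\sum_{j=1}^{q} r(L^T_{\eta_j})+r(C).$$
The inequality $\leq$ is immediate by concatenating rank decompositions of the individual blocks: these blocks live in pairwise disjoint coordinate subspaces of $\mathbb{C}^m$ and $\mathbb{C}^n$, so the union of their decompositions is a decomposition of $T$. For the inequality $\geq$, I would invoke the concision Lemma~\ref{concision}: each block is concise in its own coordinate subspaces, and any rank-one summand in a decomposition of $T$ splits into its contributions to each block; summing these contributions block-by-block recovers a valid decomposition of each block, so the total number of summands is at least the sum of the block ranks.

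After that, I would compute the rank of each block type. The block $L_\varepsilon$ represents a tensor in $\mathbb{C}^2\otimes \mathbb{C}^\varepsilon\otimes \mathbb{C}^{\varepsilon+1}$ whose second flattening has full rank $\varepsilon+1$, so by \eqref{mrank} its rank is at least $\varepsilon+1$; an explicit decomposition of size $\varepsilon+1$ (obtained by evaluating the pencil at $\varepsilon+1$ distinct parameters $\lambda_i$ and combining the resulting rank-one terms) gives the matching upper bound $r(L_\varepsilon)=\varepsilon+1$. Symmetrically, $r(L^T_\eta)=\eta+1$. For the regular part $C$, I would further decompose it according to its elementary divisors: an elementary divisor $(\lambda-\lambda_0)^v$ with $v=1$ contributes a rank-one summand, while each $(\lambda-\lambda_0)^v$ with $v\geq 2$ (i.e.\ a Jordan block in which $A$ and $B$ are not simultaneously diagonalizable) forces one extra rank-one summand beyond its size $v$. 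Grouping by invariant polynomial so that each non-squarefree $i_j(\lambda,\mu)$ of degree $d_j$ contributes $d_j+1$ and each squarefree one of degree $d_j$ contributes $d_j$, and summing over all invariant polynomials, gives $r(C)=N+\delta(C)$, which combines with the previous paragraph to yield \eqref{eqrangopencil}.

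The main obstacle is the additivity step, and in particular the lower bound $r(T)\geq \sum r(L_{\varepsilon_i})+\sum r(L^T_{\eta_j})+r(C)$: tensor rank is not additive on arbitrary direct sums, and one must genuinely exploit the disjointness of the coordinate supports together with the conciseness of each block. A secondary difficulty lies in the regular part, where showing that a single Jordan block of size $v\geq 2$ requires exactly $v+1$ (and not $v$) rank-one terms relies on the observation that a decomposition of length $v$ would force simultaneous diagonalizability of $A$ and $B$, contradicting the presence of a non-squarefree invariant polynomial.
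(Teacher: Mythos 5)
You should first be aware that the paper does not prove Theorem \ref{jajaeco} at all: it is quoted as a classical result of Grigoriev, J\'aJ\'a and Teichert, with a pointer to \cite[Remark 5.4]{BL} for the history, so there is no in-paper argument to compare yours against. Judged on its own terms, your outline follows the classical route (reduce to Kronecker normal form, then count blockwise), but two of its steps have genuine gaps. The first is the additivity lower bound. Projecting each rank-one summand of a decomposition of $T$ onto the coordinate subspaces of a single block shows only that $r(T_i)\le r(T)$ for each block $T_i$, not that $\sum_i r(T_i)\le r(T)$: a single summand may contribute nontrivially to several blocks and is then counted once per block, so "summing the contributions block-by-block" does not bound the total number of summands from below by the sum of the block ranks. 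Concision does not repair this, since the concise space of a direct sum is the direct sum of the concise spaces and places no restriction on where individual summands live. This is precisely Strassen's direct sum problem, which is false in general (Shitov); for pencils it is true, but it is a theorem of J\'aJ\'a--Takche that requires a genuine lower-bound technique (e.g.\ the substitution method), not a one-line projection argument.

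The second gap is the accounting for the regular part. Charging one extra unit to every elementary divisor $(\lambda-\lambda_0)^v$ with $v\ge 2$ yields $N$ plus the number of non-linear elementary divisors, which is in general strictly larger than $N+\delta(C)$. Concretely, the regular $4\times 4$ pencil $\lambda I_4+\bigl(J_2(\lambda_1)\oplus J_2(\lambda_2)\bigr)$ with $\lambda_1\neq\lambda_2$ has a single nontrivial invariant polynomial, namely $(\lambda+\lambda_1)^2(\lambda+\lambda_2)^2$, so $\delta(C)=1$ and the theorem gives rank $4+1=5$, whereas your per-Jordan-block count gives $3+3=6$. This example also shows that tensor rank is \emph{not} additive over the Jordan block decomposition of the regular part, so refining your additivity step down to individual elementary divisors would fail even if the coarser additivity were established. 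The correct grouping is by invariant polynomials (non-derogatory companion blocks), as your final sentence half-suggests, and one must then prove both that such a block of degree $d$ with a repeated root has rank exactly $d+1$ (the upper bound here is not a concatenation of Jordan-block decompositions) and that rank is additive across these coarser blocks.
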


In \cite{BL} the authors reviewed the orbits classification made in \cite{Par} and gave a geometric interpretation of the projectivization of all the orbits closures appearing in both cases. In the following section we will refer to the classification of \cite{BL} when necessary.


\section{Algorithm for the non-identifiability of a rank-3 tensor}\label{Sec:Algoritmo}
The purpose of this section is to write Algorithm \ref{algoalgo} where we can determine if a rank-3 tensor is not identifiable.

 All possible cases of non-identifiabile rank-$ 3$ tensors are collected in Theorem \ref{newMAINTHM}. 
\begin{itemize}
	\item The \textbf{input} of the algorithm we propose is a tensor $ T=(t_{i_1,i_2,\cdots ,i_k})\in \mathbb{C}^{n_1}\otimes\cdots \otimes \mathbb{C}^{n_k} $ presented in its coordinate descripiton with respect to canonical basis, where $k\geq 3 $, all $ n_j\geq 1$ and all $ i_j=1,\dots, n_j$, $j=1,\dots,k$. 
\item The \textbf{output} of the algorithm is a statement telling if the given tensor is a rank-3 tensor that falls into one of the cases mentioned above or not. 
\end{itemize}
The first step of Algorithm \ref{algoalgo} is to compute the \emph{concise tensor space} $ \mathcal{T}_{n'_1,\dots,n'_{k'}}=\mathbb{C}^{n'_1}\otimes \cdots \otimes \mathbb{C}^{n'_{k'}}$ \emph{of} $ T$ that we have already detailed in Subsection \ref{Subsec:Concision}, hence from now on we will work with concise tensors. Based on the resulting concise tensor space $\mathcal{T}_{n'_1,\dots,n'_{k'}}$, we split the algorithm into two different parts depending on whether $\mathcal{T}_{n'_1,\dots,n'_{k'}}$ is made by three factors or not. Subsection \ref{Subsec:3fattori} is devoted to the $3$-factors case while we refer to Subsection \ref{Subsec:+3fattori} for the other case.

\smallskip

\begin{remark}\label{spconciso}
Fix a tensor $T\in \mathbb{C}^{n_1}\otimes \cdots \otimes \mathbb{C}^{n_k}$ and compute the multilinear rank of $T$. By using the left inequality in \eqref{mrank} on each flattening $\varphi_\ell $, we are able to exclude some of the cases in which $r(T)$ is higher than 3. In those cases the algorithm stops since we are interested in  rank-$3 $ tensors.
Moreover, if the multilinear rank of $T$ contains more than $k-3$ positions equal to $1$ then $T$ is either a rank-1  tensor or a matrix and we can also exclude these cases. Lastly, we remark that since the concise Segre of a rank-3 tensor is $\nu(\mathbb{P}^{m_1}\times \cdots \times \mathbb{P}^{m_{k}})$ where all $m_i\in \{1,2\}$ for all $i=1,\dots,k$, if one of the values in $mr(T)=(\dim(\mathbb{C}^{m_i+1}))_{i=1,\dots,k} $ is different from either $2$ or $3$ then we can immediately stop the algorithm.
Therefore, at the end of the concision process, we deal only with a tensor $T'\in \mathbb{C}_1^{n'_1}\otimes \cdots \otimes \mathbb{C}_{k'}^{n'_{k'}} $ such that
\begin{itemize}
 \item $r(T')\geq 2 $, 
\item $ 3\leq k'\leq k$ 
\item all $n'_i\in \{ 2,3\} $. 
\end{itemize}

\end{remark}

Now, depending on whether $ k'=3$ or $k'\geq 4$, we split the algorithm in two different parts.

\subsection{Three factors case}\label{Subsec:3fattori}
This subsection is devoted to treat the case in which the concise tensor space of the tensor $ T$ given in input has three factors.
By Remark \ref{spconciso}, the concise space $\mathcal{T}_{n_1,\dots,n_{k}}=\mathbb{C}^{n_1}\otimes \cdots \otimes \mathbb{C}^{n_{k}}$ of a tensor $T$ is such that all $n_i\in \{2,3\}$. Moreover, if $k=3 $ the only possibilities for $\mathcal{T}_{n_1,n_2,n_3}$, up to a reordering of the factors, are: 
\begin{itemize}
\item \label{111} $\mathcal{T}_{2,2,2}=\mathbb{C}^2\otimes \mathbb{C}^2\otimes \mathbb{C}^2 $;
\item \label{211} $\mathcal{T}_{3,2,2}=\mathbb{C}^3\otimes \mathbb{C}^2\otimes \mathbb{C}^2 $;
\item \label{221} $\mathcal{T}_{3,3,2}=\mathbb{C}^3\otimes \mathbb{C}^3\otimes \mathbb{C}^2 $;
\item \label{222} $\mathcal{T}_{3,3,3}=\mathbb{C}^3\otimes \mathbb{C}^3\otimes \mathbb{C}^3 $.
\end{itemize}

\begin{remark}\label{rem1}
The presence of a $\mathbb{C}^2$ in $\mathcal{T}_{2,2,2},\mathcal{T}_{3,2,2},\mathcal{T}_{3,3,2}$ allows to see all their elements  as a matrix pencil (cf. Subsection \ref{Subsec:Pencil}), in these cases we are also able to compute the rank of one of those tensors by classifying their at its associated matrix pencils (cf. Theorem \ref{jajaeco}).
\end{remark}
All the considerations made in the following will be summed up in Algorithm \ref{part2algo}  at the end of the subsection to which Algorithm \ref{algoalgo} will refer for the case of 3-factors.
\bigskip

\subsubsection{ $\mathcal{T}_{2,2,2}=\mathbb{C}^2\otimes \mathbb{C}^2\otimes \mathbb{C}^2$}\phantom{a}\\

\noindent
The second secant variety of $ X_{1,1,1}=\nu(\mathbb{P}^1\times \mathbb{P}^1\times \mathbb{P}^1)\subset \mathbb{P}^7$ fills the ambient space, i.e. $\dim \sigma_2(X_{1,1,1})=7 $. Consequently, any tensor $[T] \in \mathbb{P}^7\setminus X_{1,1,1} $ is either an element of the open part $\sigma_2^0(X_{1,1,1}) $ or an element of the tangential variety $ \tau(X_{1,1,1})$ of $ X_{1,1,1}$. Therefore if the concise tensor space of $T$ is $\mathcal{T}_{2,2,2}=\mathbb{C}^2\otimes \mathbb{C}^2\otimes \mathbb{C}^2$, rank-1 is excluded and $T$ has rank either 2 or 3. To detect the rank of $ T$ one can use the Cayley's hyperdeterminant which is the defining equation of $\tau(X_{1,1,1})$ (cf. \cite{GKZ}). Hence, if $T$ is a concise tensor in $\mathcal{T}_{2,2,2}$ and satisfies the hyperdeterminant equation, then $T$ has rank $3$ and it is not identifiable, otherwise it has rank 2.

\bigskip

\subsubsection{$ \mathcal{T}_{3,2,2}=\mathbb{C}^3\otimes \mathbb{C}^2\otimes \mathbb{C}^2$ }\label{subsub322}\phantom{a}\\

\noindent
The non-identifiable rank-3 tensors of $\mathcal{T}_{3,2,2}=\mathbb{C}^3\otimes \mathbb{C}^2\otimes \mathbb{C}^2$ come from cases \ref{3.} and \ref{4.} of Theorem \ref{newMAINTHM}.

If $\mathcal{T}_{3,2,2}$ is the concise tensor space of $T$, then obviously $r(T)\geq  3 $. Moreover, by \cite[Theorem 3.1.1.1]{Lands}, one can show that actually $ r(T)=3$ (cf. also \cite[Table 1]{BL}). 
Therefore every concise $ T\in \mathcal{T}_{3,2,2}$ is a rank-3 tensor. 
Moreover, since the dimension of the third secant variety of $X_{2,1,1}=\nu(\mathbb{P}^2\times \mathbb{P}^1\times \mathbb{P}^1)\subset \mathbb{P}^{11} $ is $\min\{ 14,11\} $, the generic fiber of the projection from the abstract secant variety  $\mathrm{Ab}\sigma_3(X_{1,1,1}):=\overline{\{ ((p_1,p_2,p_3),q) \in X_{1,1,1}^3\times \mathbb{P}^7 \colon q\in \langle p_1,p_2,p_3 \rangle  \}}$ to the secant variety has projective dimension 2, so the generic element of $ \sigma_3(X_{2,1,1})$ has an infinite number of decompositions. Therefore, by \cite[Cap II, Ex 3.22, part (b)]{Hart}, any rank-3 tensor in $\sigma_3(X_{2,1,1})$ is not identifiable, from which follows that any tensor whose concise tensor space is $\mathcal{T}_{3,2,2}=\mathbb{C}^3\otimes \mathbb{C}^2\otimes \mathbb{C}^2 $ is a non-identifiable rank-3 tensor. 
\begin{remark}
Rank-3 tensors can also live in $\sigma_2(X_{2,1,1})$ but a concise rank-3 tensor $T\in \mathcal{T}_{3,2,2}$ lies only on the third secant variety of $X_{2,1,1}$. 
\end{remark}

Both cases \ref{3.} and \ref{4.} of Theorem \ref{newMAINTHM} can be treated by looking at the matrix pencil associated to the corresponding tensor. 

\begin{remark}\label{rb}
In order to be consistent with the matrix pencil notation used in Subsection \ref{Subsec:Pencil} in which the first factor is used as a parameter space for the pencil, we swap the first and third factor of $ \mathcal{T}_{3,2,2}$, working now on $\mathcal{T}_{2,2,3}= \mathbb{C}^2\otimes \mathbb{C}^2\otimes \mathbb{C}^3$.
\end{remark}

\cite[Table 1]{BL} offers a complete description of all orbits in $ \mathbb{C}^2\otimes \mathbb{C}^2\otimes \mathbb{C}^3$, providing also the orbit closure in each case together with the Kronecker normal form of each orbit representative and its rank. Since we are working with concise rank-$3$ tensors of $\mathcal{T}_{2,2,3}$, we are interested in cases $ 7$ and $ 8$ of \cite[Table 1]{BL}, i.e.

\begin{table}[H]
{\renewcommand{\arraystretch}{1.2}
 \begin{tabular}{ | l | l | p{4cm} |}
	\hline
		Matrix pencil  &Tensor representative    &   \\ \hline
 $ \begin{bmatrix} \lambda & \mu & 0\\ 0 & 0 & \lambda\\ \end{bmatrix}$ & $a_1\otimes (b_1 \otimes c_1 + b_2 \otimes c_3) + a_2 \otimes b_1 \otimes c_2$  & case $7$ of \cite{BL}, \cite[Ex. 3.7]{BBS}, case \ref{4.}  \\ \hline
	 $\begin{bmatrix}\lambda & \mu & 0\\ 0 & \lambda & \mu\\ \end{bmatrix}$ & $a_1\otimes b_1 \otimes c_1 + a_2\otimes b_1 \otimes c_2 + a_1 \otimes b_2 \otimes c_2+a_2 \otimes b_2 \otimes c_3$  & case $8$ of \cite{BL}, \cite[Ex. 3.6]{BBS}, case \ref{3.}  \\ \hline
\end{tabular}
}
 \caption{Concise rank-3 tensors in $\mathbb{C}^2\otimes \mathbb{C}^2\otimes \mathbb{C}^3$}
\end{table}

where we considered all $ a_i$, $b_j$, $c_k$ are linearly independent elements of the corresponding factors and $\lambda, \mu $ represent homogeneous coordinates with respect to the first factor of $\mathcal{T}_{2,2,3}$.
Let us see which is the relation between the above Kronecker normal forms and our examples of non-identifiable rank-3 tensors in $\mathcal{T}_{2,2,3}$.

\begin{lemma}\label{lem1} The matrix pencil associated to any tensor $T\in \mathbb{C}^2\otimes \mathbb{C}^2\otimes \mathbb{C}^3$ belonging to \ref{4.} is of the following form:
\begin{align*}
\begin{bmatrix}
\lambda & \mu & 0\\
0 & 0 & \lambda
\end{bmatrix} \sim \begin{bmatrix}
\lambda & \mu & 0\\
0 & 0 & \mu
\end{bmatrix}.
\end{align*}
\end{lemma}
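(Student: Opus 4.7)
The plan is to compute the pencil directly from the coordinate expression of $T$ and then reduce it to the claimed normal form by strict equivalence, allowing in addition a basis change on the pencil parameter.

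After the swap of Remark~\ref{rb}, a representative of case~\ref{4.} has the form
\begin{align*}
T = \tilde{p}\otimes v_1\otimes u_1 + \tilde{p}\otimes v_2\otimes u_2 + w\otimes \tilde{q}\otimes u_3 \;\in\; \mathbb{C}^2\otimes\mathbb{C}^2\otimes\mathbb{C}^3,
\end{align*}
where $\{\tilde{p},w\}$ is a basis of the first factor, $\{v_1,v_2\}$ a basis of the second, $\{u_1,u_2,u_3\}$ a basis of the third, and $\tilde{q}=\gamma v_1+\delta v_2\neq 0$. Slicing $T$ along $\{\tilde{p},w\}$ I read off
\begin{align*}
A_1 = \begin{bmatrix} 1 & 0 & 0\\ 0 & 1 & 0\end{bmatrix},\qquad A_2 = \begin{bmatrix} 0 & 0 & \gamma\\ 0 & 0 & \delta\end{bmatrix},
\end{align*}
so that the associated pencil is
\begin{align*}
\lambda A_1 + \mu A_2 = \begin{bmatrix}\lambda & 0 & \gamma\mu\\ 0 & \lambda & \delta\mu\end{bmatrix}.
\end{align*}

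The reduction is then elementary linear algebra. Since $\tilde{q}\neq 0$ there is an invertible $P\in \mathrm{GL}_2(\mathbb{C})$ with $P\binom{\gamma}{\delta}=\binom{1}{0}$; applying $P$ as a row operation and then restoring the first two columns to $\lambda I_2$ by a compensating column operation on those two columns turns the pencil into
\begin{align*}
\begin{bmatrix}\lambda & 0 & \mu\\ 0 & \lambda & 0\end{bmatrix}.
\end{align*}
A swap of the last two columns now yields the first claimed normal form. To obtain the equivalent second form, I interchange $\tilde{p}$ and $w$ in the basis of the first factor, which reparametrizes the pencil by $\lambda\leftrightarrow\mu$, and then swap the first two columns.

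I do not anticipate any genuine obstacle: the computation is short linear algebra, and the only point requiring care is to distinguish which linear change on which factor corresponds to strict equivalence of the pencil and which to a reparametrization of the parameters $(\lambda,\mu)$, since the equivalence $\sim$ in the lemma allows both.
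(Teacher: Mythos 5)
Your proof is correct, and it starts exactly where the paper does: the same coordinate representative of case d)/e) after the factor swap, yielding the same pencil $\bigl[\begin{smallmatrix}\lambda & 0 & \gamma\mu\\ 0 & \lambda & \delta\mu\end{smallmatrix}\bigr]$. Where you diverge is in how the normal form is identified. The paper computes $\mathrm{Ker}(A)$, observes that the minimal-degree nonconstant solution has degree $\varepsilon=1$, and invokes the Kronecker theory of singular pencils to conclude that the normal form contains an $L_1=[\lambda\ \mu]$ block, leaving the identification of the remaining $1\times 1$ regular block essentially implicit. You instead exhibit the strict equivalence directly by explicit row and column operations ($P$ on the second factor to normalize $\binom{\gamma}{\delta}$, a compensating column change, and a column swap), which is more elementary and arguably more complete, since nothing is delegated to the general structure theorem. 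Your closing remark correctly isolates the one genuine subtlety: the two displayed forms are not strictly equivalent in Gantmacher's sense (one has elementary divisor $\lambda$, the other the infinite elementary divisor $\mu$), so the $\sim$ must be read as allowing a reparametrization of $(\lambda,\mu)$, i.e.\ a basis change on the first tensor factor; your swap of $\tilde p$ and $w$ realizes exactly that. Both routes are short and valid; yours trades the appeal to minimal indices for a self-contained computation.
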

\begin{proof}
Let $ T\in \mathbb{C}^2\otimes \mathbb{C}^2\otimes \mathbb{C}^3$ be as in  case \ref{4.}, so 
$$T=\tilde{p}\otimes v_1\otimes u_1 +\tilde{p} \otimes v_2 \otimes u_2 +  w\otimes  (\alpha v_1+\beta v_2 ) \otimes u_3. $$
The matrix pencil associated to $T$ with homogeneous parameters $\lambda, \mu$ referred to the basis $\{ \tilde{p},w\}\subset\mathbb{C}^2$ is $$A= \begin{bmatrix}
\lambda & 0 & \alpha \mu\\
0 & \lambda & \beta \mu
\end{bmatrix}.$$
Since $ A$ is a singular pencil (cf. Definition \ref{pencil reg e sing}), in order to achieve the normal form of $ A$, we have to look at the minimum degree $ \varepsilon $ of the elements in $$ \hbox{Ker}(A)=\langle \begin{bmatrix}
-\alpha \mu \\ -\beta \mu \\ \lambda
\end{bmatrix} \rangle $$ with respect to $\lambda,\mu$ (cf. Subsection \ref{Subsec:Pencil}). Since $ \varepsilon=1$,  the normal form of $ A$ should contain a block of size $\varepsilon \times (\varepsilon+1) $ of this type $$ \begin{bmatrix}
\lambda & \mu & &   \\
& & \ddots & \ddots &   \\
& & &  \lambda &\mu  
\end{bmatrix}.
$$
Therefore we can conclude that \begin{equation*} A=\begin{bmatrix}
\lambda & \mu & 0\\
0 & 0 & \lambda
\end{bmatrix}.  \qedhere
\end{equation*}\end{proof}

\begin{corollary}\label{diffcases_pencil=} Let $ T\in \mathbb{C}^2\otimes \mathbb{C}^2\otimes \mathbb{C}^3$. The tensor $ T$ is a non-identifiable rank $ 3$ tensor coming from case \ref{4.} of Theorem \ref{newMAINTHM} if and only if the pencil associated to $ T$ is of the form 
$$ \begin{bmatrix}
\lambda & \mu & 0\\
0 & 0& \lambda
\end{bmatrix} \sim \begin{bmatrix}
\lambda & \mu & 0\\
0 & 0 & \mu
\end{bmatrix}.
$$ 
\end{corollary}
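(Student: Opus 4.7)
The forward implication is already contained in Lemma \ref{lem1}: if $T$ belongs to case d), then a direct computation of the pencil in the basis $\{\tilde{p}, w\}$ of the first factor, followed by the singular-pencil reduction (minimum degree of a nonzero kernel element), produces exactly the stated normal form. So nothing extra is needed for $(\Rightarrow)$.

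For the converse, my plan is to exploit the fact that the Kronecker normal form is a complete invariant of the pencil under strict equivalence (Kronecker's theorem), and strict equivalence of pencils precisely corresponds to independent changes of basis in the second and third factors of $\mathbb{C}^2 \otimes \mathbb{C}^2 \otimes \mathbb{C}^3$; the $\sim$ in the statement accounts for a change of basis in the first factor, which swaps the roles of $\lambda$ and $\mu$. By the analysis already carried out in Subsection \ref{subsub322}, every concise tensor in $\mathcal{T}_{2,2,3}$ is a non-identifiable rank-$3$ tensor, and by Theorem \ref{newMAINTHM} it must fall into case d) or case e). From \cite[Table 1]{BL} (the two relevant lines reproduced in the table above), these two cases correspond to the two distinct Kronecker normal forms
$$\begin{bmatrix} \lambda & \mu & 0 \\ 0 & 0 & \lambda \end{bmatrix} \quad \text{and} \quad \begin{bmatrix} \lambda & \mu & 0 \\ 0 & \lambda & \mu \end{bmatrix},$$
the first having minimal column index $\varepsilon=1$ and one infinite elementary divisor $\mu$, while the second is a single $L_2$ block with no regular part. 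Since these invariants are distinct, a tensor whose pencil has the first normal form cannot belong to case e) and must therefore belong to case d).

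The only subtlety is making sure that "coming from case d)" in the corollary really is invariant under the $\mathrm{GL}_2 \times \mathrm{GL}_2 \times \mathrm{GL}_3$-action that strict pencil equivalence (together with base change on the first factor) represents; this is immediate, because case d) is itself defined in Theorem \ref{newMAINTHM} up to a choice of bases on each factor. I do not expect any real obstacle here: the argument is essentially a bookkeeping combination of Lemma \ref{lem1}, Kronecker's classification, and the finite orbit structure of $\mathbb{C}^2 \otimes \mathbb{C}^2 \otimes \mathbb{C}^3$ recorded in \cite{Par,BL}.
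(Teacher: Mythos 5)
Your proof is correct, and the forward direction coincides with the paper's (both just invoke Lemma \ref{lem1}). For the converse, however, you take a genuinely different route. The paper's proof is constructive: it writes down the tensor $a_1\otimes b_1\otimes c_1 + a_1\otimes b_2\otimes c_3 + a_2\otimes b_1\otimes c_2$ realized by the normal form $\left[\begin{smallmatrix}\lambda & \mu & 0\\ 0 & 0 & \lambda\end{smallmatrix}\right]$ and observes by inspection that it has exactly the shape of case \ref{4.}; since strict equivalence (together with the $GL_2$ action on the parameter factor) is precisely a change of bases, every tensor with that pencil lies in the same orbit and hence is also of case \ref{4.}. You instead argue by exclusion: every concise tensor of $\mathcal{T}_{2,2,3}$ is a non-identifiable rank-$3$ tensor falling into case \ref{3.} or case \ref{4.} (Subsection \ref{subsub322} plus Theorem \ref{newMAINTHM}), and the two cases have strictly inequivalent Kronecker forms, so the stated pencil rules out case \ref{3.}. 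This is valid, but note that it silently uses Lemma \ref{lemmac4} (the normal form for case \ref{3.}), which in the paper appears only \emph{after} this corollary, as well as the completeness of the classification and the rank analysis of Subsection \ref{subsub322}; the paper's argument is lighter, needing only Lemma \ref{lem1} and one explicit representative. One small point worth making explicit in your write-up: the corollary's hypothesis is only $T\in\mathbb{C}^2\otimes\mathbb{C}^2\otimes\mathbb{C}^3$, so before invoking the dichotomy of Subsection \ref{subsub322} you should note that a tensor whose pencil has the stated Kronecker form is automatically concise (it lies in the orbit of the concise representative of case $7$ of \cite{BL}), which closes the loop.
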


\begin{proof}
By Lemma \ref{lem1}, the matrix pencil associated to any tensor that belongs to case \ref{4.}
is $$ \hbox{ either } \begin{bmatrix}
\lambda & \mu & 0\\
0 & 0 & \lambda
\end{bmatrix} \hbox{ or }  \begin{bmatrix}
\lambda & \mu & 0\\
0 & 0 & \mu
\end{bmatrix}. $$
The viceversa also holds since actually the left above pencil corresponds to the tensor
$$ a_1\otimes b_1\otimes c_1 +a_1\otimes b_2 \otimes c_3 + a_2\otimes b_1 \otimes c_2 $$ (considering the first factor as a parameter space for the pencil) which is as in case \ref{4.}.
\end{proof}

\begin{lemma}\label{lemmac4} The matrix pencil associated to a tensor $T\in \mathbb{C}^2\otimes \mathbb{C}^2\otimes \mathbb{C}^3$ that is as in case \ref{3.} is
$$\begin{bmatrix}
\lambda & \mu & 0\\
0 & \lambda & \mu
\end{bmatrix}. $$
\end{lemma}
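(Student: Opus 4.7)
The plan is to follow the strategy of Lemma \ref{lem1}: write the tensor of case \ref{3.} in coordinates, read off its associated matrix pencil, and reduce the pencil to the claimed Kronecker normal form by a sequence of constant (i.e.\ $\lambda,\mu$-free) column operations, which amounts to strict equivalence in the sense of Subsection \ref{Subsec:Pencil}.

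Using the convention of Remark \ref{rb}, after swapping the first and third factors a representative of case \ref{3.} sits in $\mathbb{C}^2\otimes\mathbb{C}^2\otimes\mathbb{C}^3$ and has the form
\begin{align*}
T = v_1\otimes v_1 \otimes u_1 + v_2\otimes v_2\otimes u_2 + (\alpha v_1+\beta v_2)\otimes(\alpha v_1+\beta v_2)\otimes u_3,
\end{align*}
with $\{v_1,v_2\}\subset\mathbb{C}^2$ and $\{u_1,u_2,u_3\}\subset\mathbb{C}^3$ bases and $\alpha,\beta\in\mathbb{C}^*$. Expanding the third summand and using the first factor as parameter space, with homogeneous coordinates $\lambda,\mu$ dual to $v_1,v_2$, a direct computation gives the associated matrix pencil
\begin{align*}
A(\lambda,\mu)=\begin{bmatrix}\lambda & 0 & \alpha(\alpha\lambda+\beta\mu) \\ 0 & \mu & \beta(\alpha\lambda+\beta\mu)\end{bmatrix}.
\end{align*}
The constant column operation $C_3\mapsto C_3-\alpha^2 C_1-\beta^2 C_2$ cancels the pure $\lambda$ and pure $\mu$ terms in the third column and produces $\begin{bmatrix}\lambda & 0 & \alpha\beta\mu \\ 0 & \mu & \alpha\beta\lambda\end{bmatrix}$; then, since $\alpha\beta\neq 0$, rescaling $C_3$ by $(\alpha\beta)^{-1}$ and swapping columns $2$ and $3$ yields the target $\begin{bmatrix}\lambda & \mu & 0 \\ 0 & \lambda & \mu\end{bmatrix}$. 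Each of these three operations is right multiplication by a constant invertible $3\times 3$ matrix, so $A(\lambda,\mu)$ is strictly equivalent to the claimed normal form.

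I do not foresee a substantive obstacle: the whole argument is essentially one explicit calculation. The only delicate point is to ensure that every transformation used is constant in $\lambda,\mu$ — so that what we obtain is strict equivalence and not merely $\mathbb{C}[\lambda,\mu]$-equivalence — which is clear by inspection. As an independent cross-check one may instead read off the Kronecker invariants of $A(\lambda,\mu)$: its rows are linearly independent, there are no infinite elementary divisors, and the column kernel is generated by a single vector of degree $2$ in $(\lambda,\mu)$, giving one column minimal index $\varepsilon=2$ and no elementary divisors — precisely the invariant data of the block $L_2$.
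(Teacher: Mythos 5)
Your proposal is correct, and your computation of the pencil
\begin{align*}
A(\lambda,\mu)=\begin{bmatrix}\lambda & 0 & \alpha^2\lambda+\alpha\beta\mu \\ 0 & \mu & \alpha\beta\lambda+\beta^2\mu\end{bmatrix}
\end{align*}
agrees exactly with the paper's. Where you diverge is in how you pass from this pencil to the normal form. The paper computes $\operatorname{Ker}(A)$ explicitly, observes that its generator has minimal degree $\varepsilon=2$ in $(\lambda,\mu)$, and concludes that the normal form is the single block $L_2$; your primary argument instead exhibits the strict equivalence directly, via the constant column operations $C_3\mapsto C_3-\alpha^2C_1-\beta^2C_2$, rescaling by $(\alpha\beta)^{-1}$ (legitimate since $\alpha\beta\neq 0$ in case d)), and a column swap. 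Your route is more elementary and self-contained: it produces the invertible matrix $Q$ realizing the strict equivalence and does not need the Kronecker classification to infer the normal form from a single minimal index. The paper's route is shorter but implicitly relies on the observation that an $L_2$ block already has size $2\times 3$ and therefore exhausts the pencil, leaving no room for further blocks; your ``cross-check'' paragraph is in fact the paper's argument made slightly more careful, since you also record that there are no row minimal indices and no (infinite) elementary divisors, which is the full invariant data needed to invoke Kronecker's theorem. Either argument stands on its own.
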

\begin{proof}
Let $T\in \mathbb{C}^2\otimes \mathbb{C}^2\otimes \mathbb{C}^3$ be as in case \ref{3.}, i.e. there is a basis $\{u_i\}_{i\leq 3}\subset \mathbb{C}^3$ and a basis $\{v_1,v_2\}\subset \mathbb{C}^2$ such that 
$$ T= v_1\otimes v_1\otimes u_1+ v_2\otimes v_2\otimes u_2+  (\alpha v_1+\beta v_2) \otimes(\alpha v_1+\beta v_2)\otimes u_3,$$
for some $(\alpha,\beta)\in \mathbb{C}^2\setminus \{ 0\} $. The matrix pencil associated to $T$ with homogeneous parameters $\lambda, \mu$ referred to the basis $\{ v_1,v_2\}\subset\mathbb{C}^2$ is
$$ A= \begin{bmatrix}
\lambda & 0& \alpha^2\lambda + \alpha \beta \mu \\
0 & \mu & \alpha \beta\lambda  + \beta^2\mu
\end{bmatrix}.$$
The kernel of $A$ is 
$$
\hbox{Ker}(A)=\langle \begin{bmatrix}
\alpha^2 \lambda \mu +\alpha \beta \mu^2\\ \alpha \beta \lambda^2+\beta^2\lambda \mu \\ -\lambda \mu
\end{bmatrix} \rangle,
$$
so the minimum degree $\varepsilon$ of the elements in $\hbox{Ker}(A)$ with respect to $\lambda,\mu$ is $2$.  
Therefore, the normal form of $A$ is 
\begin{align*}
\begin{bmatrix}
\lambda & \mu &0 \\
0 & \lambda & \mu\end{bmatrix}. &\qedhere
\end{align*}
 \end{proof}

\begin{corollary}\label{cor:caso8bl} Let $ T\in \mathbb{C}^2\otimes \mathbb{C}^2\otimes \mathbb{C}^3$. The tensor $ T$ is a non-identifiable rank-$ 3$ tensor coming from case \ref{3.} of Theorem \ref{newMAINTHM} if and only if the pencil associated to $ T$ is of the form 
$$ \begin{bmatrix}
\lambda & \mu & 0\\
0 & \lambda & \mu
\end{bmatrix}.
$$ \end{corollary}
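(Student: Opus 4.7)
The plan is to follow the same template as the just-proved Corollary \ref{diffcases_pencil=}. The forward implication is exactly the content of Lemma \ref{lemmac4}, which already shows that any tensor in case \ref{3.} of Theorem \ref{newMAINTHM} has Kronecker normal form $\begin{bmatrix}\lambda & \mu & 0 \\ 0 & \lambda & \mu\end{bmatrix}$, so no new argument is needed there. All the substantive work lies in the converse.

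For the converse, I start from the Kronecker normal form: strict equivalence of pencils furnishes bases $\{a_1,a_2\}$, $\{b_1,b_2\}$ of the first two factors and $\{c_1,c_2,c_3\}$ of the third, in which $T$ reads exactly as the representative recorded in the table of Subsection \ref{subsub322}, namely
\[
T = a_1 \otimes b_1 \otimes c_1 + a_2 \otimes b_1 \otimes c_2 + a_1 \otimes b_2 \otimes c_2 + a_2 \otimes b_2 \otimes c_3.
\]
Following Remark \ref{rb}, I then swap the first and third factors to return to the $\mathbb{C}^3 \otimes \mathbb{C}^2 \otimes \mathbb{C}^2$ convention in which case \ref{3.} is phrased, and the remaining task is to realise the resulting tensor in the prescribed form $u_1 \otimes v_1^2 + u_2 \otimes v_2^2 + u_3 \otimes (\alpha v_1 + \beta v_2)^2$.

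My proposed choice is $\alpha = \beta = 1$, with $v_1, v_2$ identified with $b_1, b_2$ in one $\mathbb{C}^2$ factor and with $a_1, a_2$ in the other, together with $u_3 = c_2$, $u_1 = c_1 - c_2$, $u_2 = c_3 - c_2$ in $\mathbb{C}^3$. Expanding $(v_1 + v_2)^{\otimes 2}$ produces the two off-diagonal cross-terms with coefficient $c_2$, while the diagonal pieces $v_1 \otimes v_1$ and $v_2 \otimes v_2$ absorb the residual contributions needed to yield $c_1$ and $c_3$; linear independence of $u_1, u_2, u_3$ follows from that of the $c_i$. The only obstacle I anticipate is the careful bookkeeping of the factor swap from Remark \ref{rb} and of the implicit identification of the two abstract $\mathbb{C}^2$ factors required to make sense of writing ``the same'' basis $\{v_1, v_2\}$ in both slots; once this is sorted, the verification reduces to the expansion above.
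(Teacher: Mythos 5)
Your proposal is correct and follows essentially the same route as the paper: the forward direction is exactly Lemma \ref{lemmac4}, and the converse is settled by observing that the pencil $\bigl[\begin{smallmatrix}\lambda & \mu & 0\\ 0 & \lambda & \mu\end{smallmatrix}\bigr]$ corresponds to the representative tensor $a_1\otimes b_1\otimes c_1+a_2\otimes b_1\otimes c_2+a_1\otimes b_2\otimes c_2+a_2\otimes b_2\otimes c_3$, which lies in case \ref{3.}. The only difference is that the paper merely asserts this last membership, whereas you exhibit the explicit bases $u_1=c_1-c_2$, $u_2=c_3-c_2$, $u_3=c_2$, $\alpha=\beta=1$ realizing it, and your expansion checks out.
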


\begin{proof}
By Lemma \ref{lemmac4}, the matrix pencil associated to any tensor that belongs to case \ref{3.} 
is $$\begin{bmatrix}
\lambda & \mu & 0\\
0 &  \lambda &\mu
\end{bmatrix}. $$
The viceversa also holds since actually the above pencil corresponds to the tensor
$$ e_1\otimes e_1 \otimes e_1 + (e_1\otimes e_2+e_2\otimes e_1)\otimes e_2 + e_2\otimes e_2\otimes e_3$$ which is as in case \ref{3.}.  
\end{proof}

\bigskip

\noindent
\subsubsection{$\mathcal{T}_{3,3,2}=\mathbb{C}^3\otimes \mathbb{C}^3\otimes \mathbb{C}^2 $ }\phantom{a}\\

\noindent

Let $\mathcal{T}_{3,3,2}$ be the concise tensor space of the tensor $T$ we have in input. We recall that the only non-identifiable rank-3 tensors in this case are the ones of case \ref{5.} of Theorem \ref{newMAINTHM} (cf. also Proposition \ref{prop:new statement}).
More precisely, let $ Y'=\mathbb{P}^1\times \mathbb{P}^1\times \{ w \}\subset Y_{2,2,1}=\mathbb{P}^2\times \mathbb{P}^2\times \mathbb{P}^1$. Take $q'\in \langle \nu(Y') \rangle \setminus \nu(Y_{2,2,1}) $ and $ p\in Y_{2,2,1}\setminus Y'$. Then $[T]\in \langle  q', \nu(p)\rangle $ is a rank-3 tensor and it is not identifiable.
If we take $\{u_i\}_{i\leq 3} \subset \mathbb{C}^3 $ as a basis of the first factor, $\{v_i\}_{i\leq 3}\subset  \mathbb{C}^3  $ as a basis of the second factor and $\{w,\tilde{w} \}\subset  \mathbb{C}^2$ as a basis of the third factor, then $ T$ is of the form
\begin{align}\label{tc6}
T=& u_1\otimes v_1\otimes w+u_2\otimes v_2 \otimes w+u_3 \otimes v_3\otimes \tilde{w}.
\end{align}  

Again we can look at this case by considering the associated matrix pencil of $T$. As before (cf. Remark \ref{rb}), to be consistent with the matrix pencil notation we already introduced, we swap the first and third factor of $\mathcal{T}_{3,3,2}$, working now on $\mathcal{T}_{2,3,3}=\mathbb{C}^2\otimes \mathbb{C}^3\otimes \mathbb{C}^3$. 

In \cite[Table 3]{BL}  are collected all Kronecker normal forms contained in $\mathcal{T}_{2,3,3}$. Since we are interested in rank-$ 3$ tensors having $ \mathcal{T}_{2,3,3}$ as concise tensor space, the only possibilities in terms of matrix pencils are 
\begin{align}\label{mp}
\begin{bmatrix}
\lambda & 0 & 0\\
0 & \lambda & 0 \\
0 & 0& \mu 
\end{bmatrix} \hbox{ and } \begin{bmatrix}
\lambda & 0 & 0\\
0 & \lambda+\mu & 0 \\
0 & 0& \mu 
\end{bmatrix}.
\end{align}
\begin{remark}
The matrix pencil associated to \eqref{tc6} is the first one in \eqref{mp} and it is easy to check that the tensor corresponding to the first matrix pencil in \eqref{mp} is actually $T$. 
\end{remark}
Therefore, if the concise tensor space of $T$ is $\mathcal{T}_{2,3,3,}$, it is sufficient to consider the normal form of the concise tensor $T'$ related to $ T$ and check if it corresponds to $$ \begin{bmatrix}
\lambda & 0 & 0\\
0 & \lambda & 0 \\
0 & 0& \mu 
\end{bmatrix}.$$

 Moreover, as in the previous case, we are able to detect the rank of any tensor having $\mathcal{T}_{2,3,3}$ as a concise tensor space (cf. Remark \ref{rem1}).

\bigskip

\noindent
\subsubsection{$\mathcal{T}_{3,3,3}=\mathbb{C}^3\otimes \mathbb{C}^3\otimes \mathbb{C}^3$}\phantom{a}\\

By Theorem \ref{newMAINTHM}, all rank-$ 3$ tensors whose concise tensor space is $\mathcal{T}_{3,3,3}$ are identifiable. Therefore if the concise tensor space of $T$ is  $\mathcal{T}_{3,3,3}$  we can immediately say that $ T$ does not belong to one of the $6$ families of non-identifiable rank-3 tensors.

We collect all the considerations made in this subsection in the following pseudo-algorithm.

\begin{algorithm}[H]
	\caption{(Three factors case)}\label{part2algo} 
	\noindent 
	\begin{flushleft}
\textbf{Input:} Concise tensor $ T=(t_{i_1,i_2,i_3}) \in \mathbb{C}^{n_1} \otimes \mathbb{C}^{n_2} \otimes \mathbb{C}^{n_3} $, with $n_i=2,3$ for all $i=1,2,3$ and $n_1\leq n_2\leq n_3$.\\
\textbf{Output:} A statement on whether $ T$ belongs to one of the six cases of non-identifiable rank-3 tensors or not.
\end{flushleft}
\begin{enumerate}
\item Case $(n_1,n_2,n_3)=(2,2,2) $.\\ If $ T$ satisfies Cayley's hyperdeterminant equation

\begin{align*}
\hbox{Hdet}(T):=\left( \begin{vmatrix}
t_{0,0,0} & t_{0,0,1} \\
t_{1,0,0} & t_{1,1,1}
\end{vmatrix} +\begin{vmatrix}
t_{0,1,0} & t_{0,0,1} \\
t_{1,1,0} & t_{1,0,1}
\end{vmatrix} \right)^2 -4 \begin{vmatrix}
t_{0,0,0} & t_{0,0,1} \\
t_{1,0,0} & t_{1,0,1}
\end{vmatrix} \cdot \begin{vmatrix}
t_{0,1,0} & t_{0,1,1} \\
t_{1,1,0} & t_{1,1,1}
\end{vmatrix}
\end{align*}
 the output is \emph{T belongs to case \ref{2.} of Theorem \ref{newMAINTHM} therefore it is not identifiable}. \\Otherwise the output is \emph{$ T$ is an identifiable rank-2 tensor}. 
\item Case $(n_1,n_2,n_3)=(2,2,3) $ (Remark that we already know that $T$ is not identifiable (cf. Subsection \ref{subsub322}), so we only need to classify it). \\
 Compute the Kronecker normal form of $ T$. 
\begin{itemize}
\item If the Kronecker normal form of $ T$ is $$\begin{bmatrix}
\lambda & \mu & 0\\
0 & 0& \mu
\end{bmatrix} $$ then the output is \emph{$T$ belongs to case \ref{4.} of Theorem \ref{newMAINTHM}, therefore it is not identifiable.} 
\item Else, $ T$ is as in case \ref{3.} and the output is \emph{$T$ belongs to case \ref{3.} of Theorem \ref{newMAINTHM} and it is not identifiable}. 
\end{itemize}
\item Case $(n_1,n_2,n_3)=(2,3,3) $. \\Compute the normal form of $T$.
\begin{itemize}
	\item If the Kronecker normal form of $ T$ is $\begin{bmatrix}
		\lambda & 0 & 0\\
		0 & \lambda & 0 \\
		0 & 0& \mu 
	\end{bmatrix} $ then the output is \emph{T belongs to case \ref{5.} of Theorem \ref{newMAINTHM}, therefore it is not identifiable}. 
	\item Else the output will be the rank of $ T$ computed via \eqref{eqrangopencil} of Theorem \ref{jajaeco} and \emph{$T $ is not on the list of non-identifiable rank-3 tensors}.
\end{itemize}
\item Otherwise $(n_1,n_2,n_3)=(3,3,3)$ and the output is \emph{$T $ is not on the list of non-identifiable rank-3 tensors, hence $T$ is either identifiable or its rank is greater than $3$}.
\end{enumerate}
\end{algorithm}
Here we provide an implementation of Algorithm \ref{part2algo} with the algebra software Macaulay2 \cite{M2}. The input of the function is a concise $3$-factors tensor $T\in \mathbb{C}^{n_1}\otimes \mathbb{C}^{n_2}\otimes \mathbb{C}^{n_3}$, with $2\leq n_1\leq n_2\leq n_3 \leq 3$. In practice $T$ must be given as a list of matrices $\{ A_1,\dots, A_{n_1}\}$, where each $A_i\in M_{n_2\times n_3}(\mathbb{C})$ as displayed in the following image.

\begin{center}
	\tikzset{every picture/.style={line width=0.75pt}} 
	
	\begin{tikzpicture}[x=0.75pt,y=0.75pt,yscale=-1,xscale=1]
		
		\draw   (322.25,158.65) -- (366.25,158.65) -- (366.25,200.67) -- (322.25,200.67) -- cycle ;
		\draw  [dash pattern={on 0.84pt off 2.51pt}]  (366.25,200.67) -- (440,180) ;
		\draw  [dash pattern={on 0.84pt off 2.51pt}]  (366.25,158.65) -- (439.82,138.5) ;
		\draw  [dash pattern={on 0.84pt off 2.51pt}]  (322.25,158.65) -- (395.81,138.5) ;
		\draw  [dash pattern={on 0.84pt off 2.51pt}]  (322.25,200.67) -- (396,180) ;
		\draw    (395.81,138.5) -- (396,180) ;
		\draw    (396,180) -- (440,180) ;
		\draw    (395.81,138.5) -- (439.82,138.5) ;
		\draw    (439.82,138.5) -- (440,180) ;
		
		\draw (331.05,172.05) node [anchor=north west][inner sep=0.75pt]    {$A_{1}$};
		\draw (407.51,152.79) node [anchor=north west][inner sep=0.75pt]    {$A_{n_{1}}$};

	\end{tikzpicture}
\end{center}
  For the case $(n_1,n_2,n_3)=(2,2,2)$ the algorithm evaluates the Cayley's hyperdeterminant in the entries of the tensor, while for the remaining cases it computes the Kronecker normal form of the matrix pencil associated to the given $T $.
  
  \vspace{1.cm}

\begin{lstlisting}[frame=single,caption={M2 implementation of Algorithm \ref{part2algo} }]
needsPackage "SparseResultants"
needsPackage "Kronecker"

threefactors = (T)->(
	n = (length T,numrows T_0,numcols T_0);
	if n == (2,2,2) then(
    	dis = sparseDiscriminant(genericMultihomogeneousPolynomial((2,2,2),(1,1,1)));
    	V = (ring dis)**ring (T_0);
    	T = apply(T,x->sub(x,V));
    	Tlist = flatten {flatten entries T_0,flatten entries T_1};
    	dis = sub(sub(dis,V),for i to length Tlist-1  list V_(i)=>Tlist_i);
    	return {n,dis};
    )else if n==(3,3,3) then(
    	print "n=(3,3,3), T is not on the list";
    )else( 
    	R = QQ[x,y];
    	T = apply(T,x->sub(x,R));
    	A = kroneckerNormalForm (x*T_0+y*T_1);
    	return {n,A_0};
    );
)

\end{lstlisting}

\subsection{More than three factors}\label{Subsec:+3fattori}

We are now ready to develop the case in which a concise tensor space of a tensor has more than $3$ factors,  i.e. $$ \mathcal{T}_{n_1,\dots,n_{k}}=\mathbb{C}^{n_1}\otimes \cdots \otimes \mathbb{C}^{n_{k}}$$ where $ k>3$ and all $ n_i \in \{2,3\}$. 
We will first treat the case in which $k=4$ and $n_1=n_2=n_3 =n_4=2$ and then we will treat all together the remaining cases.

\subsubsection{Non-identifiable tensors with at least $4$ factors}
Consider for the moment the $4$-factors case, i.e. $$ \mathcal{T}_{n_1,n_2,n_3,n_4}=\mathbb{C}^{n_1}\otimes \mathbb{C}^{n_2}\otimes \mathbb{C}^{n_3}\otimes \mathbb{C}^{n_4},$$ where all $n_i\in \{ 2,3\}$. Following the classification of Theorem \ref{newMAINTHM}, working with $4$ factors there are only two families of non-identifiable tensors, namely items \ref{6.} and \ref{5.}. Case \ref{5.} is referred to non-identifiable rank-$3$ tensors of \cite[Proposition 3.10]{BBS} adapted to the $4$-factors case, while case \ref{6.} contains any rank-3 tensor in $\mathbb{C}^2\otimes \mathbb{C}^2\otimes \mathbb{C}^2\otimes \mathbb{C}^2$. 
Let us first treat the case of $\mathcal{T}_{2^4}=\mathbb{C}^2\otimes \mathbb{C}^2\otimes \mathbb{C}^2\otimes \mathbb{C}^2$.

\bigskip

\noindent
\subsubsection{$\mathcal{T}_{2^4}=\mathbb{C}^2\otimes \mathbb{C}^2\otimes \mathbb{C}^2\otimes \mathbb{C}^2$  }\phantom{a}\\

\noindent
As already recalled, the third secant variety of the Segre variety $X_{1^4}$ is defective (cf. \cite[Theorem 4.5]{AOP}). Moreover, by \cite[Theorem 1.8]{BB3sec}, any element of $ \sigma_3(X_{1^4})\setminus \sigma_2(X_{1^4})$ is a rank-$3$ tensor. 
Therefore any tensor in $\sigma_3(X_{1^4})\setminus \sigma_2(X_{1^4})$ is a non-identifiable rank-$3$ tensor.

Thus, working over $\mathcal{T}_{2^4}$, to detect whether a given tensor $T\in \mathcal{T}_{2^4}$ is a non-identifiable rank-$3$ tensor it is sufficient to verify if $[T]\in \sigma_3(X_{1^{4}})\setminus \sigma_2(X_{1^{4}})$, i.e. if $T$ satisfies the equations of $\sigma_3(X_{1^4})$ (cf. \cite[Theorem 1.4]{Qieqsigma3})
and $T$ does not satisfies the equations of $\sigma_2(X_{1^4})$ for which we refer to \cite{LandMan}.
\bigskip


\noindent
\subsubsection{$\mathcal{T}_{n_1,\dots,n_{k}}\neq \mathbb{C}^2\otimes \mathbb{C}^2\otimes \mathbb{C}^2\otimes \mathbb{C}^2$  , with $k\geq 4$, $n_i=2,3$ for all $i=1,\dots,k$}\phantom{a}\\

\noindent
Let now $k\geq 4$ with $\mathcal{T}_{n_1,\dots,n_{k}}\neq \mathbb{C}^2\otimes \mathbb{C}^2\otimes \mathbb{C}^2\otimes \mathbb{C}^2$. In this case, any non-identifiable rank-$3$ tensor comes from case \ref{5.} of Theorem \ref{newMAINTHM}.
 More precisely, let $$ Y':=\mathbb{P}^1\times \mathbb{P}^1\times \{ u_3 \} \times \cdots \times \{ u_k\}\subset Y_{m_1,m_2,1^{k-2}}=\mathbb{P}^{m_1}\times \mathbb{P}^{m_2}\times \mathbb{P}^1\times \cdots \times \mathbb{P}^1,$$ with $m_1,m_2\in \{ 1,2\}$. Let $q'\in \langle \nu(Y') \rangle \setminus \nu(Y_{m_1,m_2,1^{k-2}}) $ and $ p\in Y_{m_1,m_2,1^{k-2}}\setminus Y'$. We saw that any $[T]\in \langle  q', \nu(p)\rangle $ is a non-identifiable rank-3 tensor. Let $ \{u_i,\tilde{u}_i\}$ be a basis of the $\mathbb{C}^{n_i}$ arising from the $i$-th factor of $Y_{m_1,m_2,1^{k-2}}$ for all $ i\geq 3$. Take distinct $a_1,a_2\in \mathbb{C}^{m_1+1}$ and distinct $b_1,b_2\in \mathbb{C}^{m_2+1}$ and if $m_1=1$ then let $a_3\in \langle a_1,a_2\rangle $ otherwise we let $ a_1,a_2,a_3$ form a basis of the first factor. Let $ b_3 \in \langle b_1,b_2\rangle$ if $m_2=1$, otherwise $ b_1,b_2,b_3$ form a basis of the second factor. With respect to these bases $T$ can be written as
\begin{align}\label{nonidpiufat}
T=& (a_1\otimes b_1+a_2\otimes b_2)\otimes u_3 \otimes \cdots \otimes u_k + a_3 \otimes b_3\otimes \tilde{u}_3 \otimes \cdots \otimes \tilde{u}_k.
\end{align}    
Since the only type of tensors that we have to detect corresponds to \eqref{nonidpiufat}, we may restrict ourselves to consider the following tensor spaces:
\begin{itemize}
\item $ \mathcal{T}_{3,2^{k-1}}=\mathbb{C}^3 \otimes \mathbb{C}^2\otimes \mathbb{C}^2\otimes \cdots \otimes \mathbb{C}^2$;
\item $ \mathcal{T}_{3,3,2^{k-2}}=\mathbb{C}^3 \otimes \mathbb{C}^3 \otimes  \mathbb{C}^2\otimes \cdots \otimes \mathbb{C}^2$;
\item $ \mathcal{T}_{2^{k}}=\mathbb{C}^2 \otimes \mathbb{C}^2\otimes \mathbb{C}^2\otimes \cdots \otimes \mathbb{C}^2$ (with $k\geq 5$).
\end{itemize}

\begin{definition}
Let $\mathcal{T}_{n_1,\dots,n_k}=\mathbb{C}^{n_1}\otimes \cdots \otimes \mathbb{C}^{n_k}$, fix integer $k'\leq k$ and let $I=\cup_{i=1}^{k'} I_i$ be a partition of $\{1,\dots,k\}$.  A \emph{reshape} of $\mathcal{T}$ of type $I_1,\dots,I_{k'} $ is a bijection
\begin{align*}
\theta_{I_1,\dots,I_{k'}} : \mathcal{T}_{n_1,\dots,n_k}\longrightarrow \mathbb{C}^{N_1}\otimes \cdots \otimes \mathbb{C}^{N_{k'}},
\end{align*} 
where $ \mathbb{C}^{N_i}\cong \bigotimes_{j\in I_i}\mathbb{C}^{n_j} $ for all $i=1,\dots,k'$, i.e. $N_i=\prod_{j\in I_i}n_i$ and $\mathbb{C}^{N_i}$ is the vectorization of $\bigotimes_{j\in I_i}\mathbb{C}^{n_j} $.
\end{definition}
In other words a reshape of a tensor space $\mathcal{T}_{n_1,\dots,n_k}$ is a different way of grouping together some of the factors of $\mathcal{T}_{n_1,\dots,n_k}$ (eventually it is also necessary to reorder the factors of $\mathcal{T}_{n_1,\dots,n_k}$).

In the following we will be interested in the reshape grouping together two factors of a tensor space $\mathcal{T}_{n_1,\dots,n_k}$ and we will denote by $\theta_{i,j}$ the corresponding map for some $i,j=1,\dots,k$, i.e.
$$\theta_{i,j} \colon \mathbb{C}^{n_1}\otimes \cdots \otimes \mathbb{C}^{n_{k}}\xrightarrow{\sim} (\mathbb{C}^{n_i}\otimes \mathbb{C}^{n_j})\otimes \mathbb{C}^{n_1}\otimes \cdots \otimes \widehat{\mathbb{C}^{n_i}} \otimes \cdots \otimes \widehat{\mathbb{C}^{n_j}}\otimes \cdots \otimes \mathbb{C}^{n_{k}}.   $$
\begin{example}
Let  $\mathcal{T}_{n_1,\dots,n_k}=\mathbb{C}^{n_1}\otimes \cdots \otimes \mathbb{C}^{n_k}$ and denote by $\theta_{1,2}$ the reshape grouping together the first two factors of $\mathcal{T}_{n_1,\dots,n_k}$
\begin{align*}
\theta_{1,2}: \mathcal{T}_{n_1,\dots,n_k}&\longrightarrow \left(\mathbb{C}^{n_1}\otimes \mathbb{C}^{n_2}\right)\otimes \mathbb{C}^{n_3}\otimes \cdots \otimes \mathbb{C}^{n_k} \\
T=\sum_{\substack{i_1,\dots,i_k \\  i_j=1,\dots,n_j,j=1,\dots,k}}  t_{i_1,\dots,i_k}e_{i_1}\otimes \cdots \otimes e_{i_k}&\mapsto \sum_{\substack{i_1,\dots,i_k \\  i_j=1,\dots,n_j,j=1,\dots,k}}  t_{i_1,\dots,i_l}(e_{i_1}\otimes e_{i_2})\otimes e_{i_3}\otimes \cdots \otimes e_{i_k}.
 \end{align*}

Since $\mathbb{C}^{n_1}\otimes \mathbb{C}^{n_2} \cong \mathbb{C}^{n_1n_2}$, by sending the basis $\{ e_{i_1}\otimes e_{i_2}\}_{i_1=1,\dots,n_1,i_2=1,\dots,n_2}$ of $\mathbb{C}^{n_1}\otimes \mathbb{C}^{n_2}$ to the basis $\{ e_{i_1,i_2}\}$ of $\mathbb{C}^{n_1n_2}$, we write $$
\theta_{1,2}(T)= \sum_{ i_1,\dots,i_k }   t_{i_1,i_2,i_3,\dots,i_k} e_{i_1,i_2}\otimes e_{i_3}\otimes \cdots \otimes e_{i_k}\in \mathbb{C}^{n_1n_2}\otimes \mathbb{C}^{n_3}\otimes \cdots \otimes \mathbb{C}^{n_k}.$$

\end{example}

The following lemma tells us how to completely characterize non-identifiable rank-$3$ tensors lying on either $\mathcal{T}_{3,2^{k-1}}$ or $\mathcal{T}_{3,3,2^{k-2}}$ or $\mathcal{T}_{2^k}$.

\begin{lemma}\label{lemmaalgo}
Let $T\in \mathcal{T}_{n_1,n_2,2^{k-2}}=\mathbb{C}^{n_1} \otimes \mathbb{C}^{n_2}\otimes \mathbb{C}^2\otimes \cdots \otimes \mathbb{C}^2$ be a concise tensor in $ \mathcal{T}_{n_1,n_2,2^{k-2}} $, where $n_1,n_2\in \{2,3\}$, $k\geq 4$ and $ \mathcal{T}_{n_1,n_2,2^{k-2}}\neq \mathcal{T}_{2^4}$. Then $T$ is as in case \ref{5.} of Theorem \ref{newMAINTHM} if and only if the following conditions hold:
\begin{enumerate}
\item the reshaped tensor $\theta_{1,2}(T)\in \mathbb{C}^{n_1n_2}\otimes (\mathbb{C}^{2})^{\otimes (k-2)}$ is an identifiable rank-$2$ tensor with respect to $ \mathbb{C}^{n_1n_2}\otimes (\mathbb{C}^{2})^{\otimes(k-2)}$
$$\theta_{1,2}(T)=T_1+T_2=x\otimes u_3\otimes \cdots \otimes u_k + y \otimes v_3\otimes \cdots \otimes v_k \in \mathbb{C}^{n_1n_2}\otimes (\mathbb{C}^{2})^{\otimes (k-2)}$$
for some independent $x,y\in \mathbb{C}^{n_1n_2}$ and some $u_i,v_i\in \mathbb{C}^2$ with $\{u_i,v_i\}$ linearly independent for all $i=3,\dots,k$;
\item looking at $x,y \in \mathbb{C}^{n_1n_2}$ as elements of $\mathbb{C}^{n_1}\otimes \mathbb{C}^{n_2}$ then $\{r(x),r(y)\}=\{1,2\}$.
\end{enumerate}
\end{lemma}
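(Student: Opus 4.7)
The plan is to prove the biconditional by passing between $T \in \mathcal{T}_{n_1,n_2,2^{k-2}}$ and its reshape $\theta_{1,2}(T) \in \mathbb{C}^{n_1n_2}\otimes(\mathbb{C}^2)^{\otimes(k-2)}$, using identifiability of rank-$2$ tensors with at least three factors as the key bridge.

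For the forward direction (case \ref{5.} $\Rightarrow$ (1) and (2)), I would write $T$ in its canonical form
\[
T = (a_1\otimes b_1 + a_2\otimes b_2)\otimes u_3\otimes\cdots\otimes u_k + a_3\otimes b_3\otimes\tilde{u}_3\otimes\cdots\otimes\tilde{u}_k
\]
and apply $\theta_{1,2}$ directly, obtaining $\theta_{1,2}(T) = x\otimes u_3\otimes\cdots\otimes u_k + y\otimes\tilde{u}_3\otimes\cdots\otimes\tilde{u}_k$ with $x := a_1\otimes b_1 + a_2\otimes b_2$ and $y := a_3\otimes b_3$. The linear independence of each of the pairs $\{a_1,a_2\}$, $\{b_1,b_2\}$ (forced by case \ref{5.}, since otherwise $x$ would collapse to rank $\leq 1$ and $T$ would have rank $\leq 2$) gives $r(x) = 2$ and $r(y) = 1$, verifying (2); the two summands are independent in $\mathbb{C}^{n_1n_2}$ because their matrix ranks differ, and each $\{u_i,\tilde{u}_i\}$ is a basis by hypothesis. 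Identifiability in (1) then follows from Lemma \ref{concision}: the concise tensor space of $\theta_{1,2}(T)$ is $(\mathbb{C}^2)^{\otimes(k-1)}$ with $k-1\geq 3$, so \cite[Proposition 2.3]{BBS} excludes the only non-identifiable rank-$2$ case, that of a $2\times 2$ matrix.

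For the reverse direction, I would run the argument backwards. Condition (1) provides the rank-$2$ decomposition $\theta_{1,2}(T) = x\otimes u_3\otimes\cdots\otimes u_k + y\otimes v_3\otimes\cdots\otimes v_k$, and (2) lets me assume (up to swapping summands) $r(x) = 2$ and $r(y) = 1$. Writing $x = a_1\otimes b_1 + a_2\otimes b_2$ with both pairs $\{a_1,a_2\}$, $\{b_1,b_2\}$ linearly independent (because $r(x)=2$) and $y = a_3\otimes b_3$, then applying $\theta_{1,2}^{-1}$ and setting $\tilde{u}_i := v_i$, I recover $T$ in the shape of case \ref{5.}. The dependence constraints demanded by case \ref{5.} fall out of conciseness of $T$: if $n_i = 2$ then $\langle a_1,a_2\rangle$ already equals $\mathbb{C}^{n_i}$, so $a_3$ lies there; if $n_i = 3$ then the image of the $i$-th flattening of $T$ is $\langle a_1,a_2,a_3\rangle$, which must have dimension $3$, so the three vectors are linearly independent. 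Finally, $r(T) = 3$ (and not less) follows from observing that a rank-$\leq 2$ decomposition of $T$ would reshape to a rank-$2$ decomposition of $\theta_{1,2}(T)$ whose $\mathbb{C}^{n_1 n_2}$-components are both of rank $1$, contradicting (2) via the identifiability already supplied by (1).

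The step I expect to require the most care is the identifiability claim in the forward direction, which hinges on verifying that the first factor of $\theta_{1,2}(T)$, a priori of dimension $n_1 n_2 \in \{4, 6, 9\}$, collapses to dimension exactly $2$ in concise form. This is precisely what the independence of $x$ and $y$ provides; once established, \cite[Proposition 2.3]{BBS} delivers identifiability, and the remaining bookkeeping under the isomorphism $\mathbb{C}^{n_1 n_2} \cong \mathbb{C}^{n_1}\otimes\mathbb{C}^{n_2}$ is routine.
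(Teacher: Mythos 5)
Your proposal is correct and follows essentially the same route as the paper's proof: both directions pass through the reshape $\theta_{1,2}$, identify $x=a_1\otimes b_1+a_2\otimes b_2$ (rank $2$) and $y=a_3\otimes b_3$ (rank $1$), invoke the fact that the only non-identifiable rank-$2$ tensors are matrices to get identifiability of the reshaped tensor with $k-1\geq 3$ concise factors, and recover the linear-independence constraints of case \ref{5.} from conciseness of $T$. The only additions are minor elaborations (e.g.\ the explicit remark that $r(T)=3$ in the reverse direction), which do not change the argument.
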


\begin{proof}
Let $T\in \mathcal{T}_{n_1,n_2,2^{k-2}}$ be as in case \ref{5.} of Theorem \ref{newMAINTHM}, so $T$ can be written as 
$$T=a_1\otimes b_1\otimes u_3\otimes \cdots \otimes u_{k} + a_2\otimes b_2\otimes u_3\otimes \cdots \otimes u_k + a_3 \otimes b_3 \otimes v_3\otimes \cdots \otimes v_k ,  $$
where $u_i\neq v_i $ for all $i=3,\dots, k$, $a_1,a_2,a_3$ are linearly independent if $n_1=3$ and $b_1,b_2,b_3$ are linearly independent if $n_2=3$. Let $\theta_{1,2}$ be the reshape grouping together the first two factors of $\mathcal{T}_{n_1,\dots,n_k}$. Let $x:=a_1\otimes b_1, y:= a_2\otimes b_2$ and $z:=a_3\otimes b_3$ and remark that $r(x+y)=2$ and $r(z)=1$. Therefore
\begin{align*}
 \theta_{1,2}(T)=&x\otimes u_3\otimes \cdots \otimes u_k + y\otimes u_3\otimes \cdots \otimes u_k + z\otimes v_3\otimes \cdots \otimes v_k\\ 
=& (x+y)\otimes u_3\otimes \cdots \otimes u_k + z\otimes v_3\otimes \cdots \otimes v_k\\
=& T_1 + T_2 \in \mathbb{C}^{n_1n_2}\otimes \mathbb{C}^2\otimes \cdots \otimes \mathbb{C}^2.
\end{align*} 
Note that the rank of $(T_1+T_2)\in \mathcal{T}_{n_1n_2,2^{k-2}}$ is at most 2 and in fact $r(T_1+T_2)=2$ since $u_i,v_i$ are linearly independent for all $i=3,\dots,k$. Moreover, we recall that the only non-identifiable rank-2 tensors are matrices (cf. \cite[Proposition 2.3]{BBS}). Therefore, since the concise tensor space of $T_1+T_2$ is made by at least $3$ factors, then $T_1+T_2$ is an identifiable rank-2 tensor.
\smallskip

Viceversa let $T \in \mathcal{T}_{n_1,n_2,2^{k-2}}$ such that $\theta_{1,2}(T)\in \mathbb{C}^{n_1n_2}\otimes (\mathbb{C}^2)^{k-2}$ is an identifiable rank-$2$ tensor 
$$\theta_{1,2}(T)=T_1+T_2=a\otimes u_3\otimes \cdots \otimes u_k+b\otimes v_3\otimes \cdots \otimes v_k, $$
for some unique $a,b\in \mathbb{C}^{n_1n_2}$ with $\langle a, b\rangle \cong \mathbb{C}^2$ and unique $u_i,v_i\in \mathbb{C}^2$ with $\langle u_i, v_i\rangle \cong \mathbb{C}^2$ for all $i=3,\dots,k$. By assumption $\theta^{-1}_{1,2}(a),\theta^{-1}_{1,2}(b)\in \mathbb{C}^{n_1}\otimes \mathbb{C}^{n_2}$ are such that $\{r(\theta^{-1}_{1,2}(a)),r(\theta^{-1}_{1,2}(b))\} = \{ 1,2\}$ and by relabeling if necessary we may assume $r(\theta^{-1}_{1,2}(a))=2$ and $r(\theta^{-1}_{1,2}(b))=1$.

Let us see $\theta_{1,2}(T)$ as an element of $\mathcal{T}_{n_1,n_2,2^{k-2}}=\mathbb{C}^{n_1}\otimes \mathbb{C}^{n_2}\otimes \mathbb{C}^2\otimes \cdots \otimes \mathbb{C}^2$.
Since $T_2$ is a rank-1 tensor, there exist $v_1\in \mathbb{C}^{n_1}$, $v_2\in \mathbb{C}^{n_2}$ such that $\theta^{-1}_{1,2}(b)=v_1\otimes v_2$, i.e. 
$$\theta^{-1}_{1,2}(T_2)=v_1\otimes v_2\otimes v_3\otimes \cdots \ \otimes v_k .$$
 
Moreover, since $r(\theta^{-1}_{1,2}(a))=2$ then there exist linearly independent $a_1,a_2\in \mathbb{C}^{n_1}$ and linearly independent $b_1,b_2\in \mathbb{C}^{n_2}$ such that $\theta^{-1}_{1,2}(a)=a_1\otimes b_1+a_2\otimes b_2$, i.e.
$$ \theta^{-1}_{1,2}(T_1)= a_1\otimes b_1\otimes u_3\otimes \cdots \otimes u_k + a_2\otimes b_2\otimes u_3\otimes \cdots \otimes u_k.$$
We remark that the concise space of $T $ is $\mathcal{T}_{n_1,n_2,2^{k-2}}$, therefore if $n_1=3$ (or $n_2=3$) then $a_1,a_2, v_1$ are linearly independent ($b_1,b_2,v_2$ are linearly independent). Thus $T$ is as in case \ref{5.}.
\end{proof}

\begin{remark}\label{rempermutaz}
In Lemma \ref{lemmaalgo} we assumed that dealing with a tensor as in \eqref{nonidpiufat} the non-identifiable part of the tensor was in the first two factors because it is always possible to permute the factors of the tensor space in this way. This assumption cannot be made in the algorithm and we have to be careful if either $(n_1,n_2)=(3,2)$ or $(n_1,n_2)=(2,2)$.
Dealing with $(n_1,n_2)=(3,2)$, we have to check if there exists $i=2,\dots,k$ such that $\theta_{1,i}(T)$ satisfies the conditions of Lemma \ref{lemmaalgo}.\\
Similarly, for the case of $(n_1,n_2)=(2,2)$ we have to check all reshaping of $T$ if necessary, i.e. we have to check if there exist $i,j\in \{1,\dots,k\}$ with $i\neq j$ such that $\theta_{i,j}(T)$ satisfies the conditions of Lemma \ref{lemmaalgo}.
 \end{remark}

Recall that a concise tensor $T\in \mathbb{C}^{n_1n_2}\otimes (\mathbb{C}^2)^{\otimes (k-2)}$ is an element of $\sigma_2(X_{(n_1n_2-1),1^{k-2}})\setminus \tau(X_{(n_2n_2-1),1^{k-2}})$ if and only if there is a specific change of basis on each factors $\tilde{g}=(g,g_3,\dots,g_k)\in GL_{n_1n_2}\times GL_2\times \cdots \times GL_2$  such that 
\begin{equation}\label{r2reshape}
\tilde{g}\cdot T=x\otimes u_3\otimes \cdots u_k + y \otimes v_3\otimes \cdots \otimes v_k. 
\end{equation}
By Lemma \ref{lemmaalgo}, given an identifiable rank-2 tensor $T\in \mathcal{T}_{n_1n_2,2^{k-2}}$, in order to verify if $T$ is as in case \ref{5.}, we do not need to find an explicit decomposition of $T$ as in \eqref{r2reshape} but it is enough made the following steps:
\begin{itemize}
 \item distinguish $x,y\in \mathbb{C}^{n_1n_2}$ and look at them as elements of $\mathbb{C}^{n_1}\otimes \mathbb{C}^{n_2}$; 
\item prove that either $r(x)=2$ and $r(y)=1$ or that $r(x)=1$ and $r(y)=2$. 
\end{itemize}
Let us explain in detail how to do so.
\subsubsection{Reshape procedure for an identifiable rank-2 tensor of $\mathcal{T}_{n_1n_2,2^{k-2}}$ (how to find $x,y\in \mathbb{C}^{n_1}\otimes \mathbb{C}^{n_2}$)}
Let $T$ be an identifiable rank-$2$ tensor in $\mathcal{T}_{n_1n_2,2^{k-2}}=\mathbb{C}^{n_1n_2}\otimes (\mathbb{C}^2)^{\otimes (k-2)}$. Remark that the rank of the first flattening $\varphi_1\colon (\mathbb{C}^2)^{\otimes (k-2)}\rightarrow  (\mathbb{C}^{n_1n_2})^*$ of $T$ is $2$ and to complete the concision process, we can take as basis of the first new factor two independent elements $\widehat{x},\widehat{y} $ of  $\hbox{Im}(\varphi_1)$.
Therefore $T$ can be written as
$$T=\widehat{x}\otimes u_3\otimes \cdots \otimes u_k + \widehat{y} \otimes v_3\otimes \cdots \otimes v_k\in \mathbb{C}^2\otimes (\mathbb{C}^2)^{\otimes k}. $$ 
If we reshape our tensor space by grouping together all factors from the $4$-th one onwards, then $T$ can be seen as
\begin{align*}
\widehat{x} \otimes u_3\otimes \overbrace{(u_4\otimes \cdots \otimes u_k)}^{\widehat{u}}+\widehat{y}\otimes v_3\otimes\overbrace{(v_4\otimes \cdots \otimes v_k)}^{\widehat{v}}=&\\
\widehat{x} \otimes u_3\otimes \widehat{u} + \widehat{y}\otimes v_3\otimes \widehat{v}&\in \mathbb{C}^2\otimes \mathbb{C}^2\otimes ((\mathbb{C}^2)^{\otimes (k-3)})  .
\end{align*}

We want to look at this 3-factors tensor as a pencil of matrices with respect to the second factor of $\mathbb{C}^2\otimes \mathbb{C}^2\otimes (\mathbb{C}^{2})^{\otimes (k-3)}$. Let $u_3=(u_{3,1},u_{3,2})$, $v_3=(v_{3,1},v_{3,2})$ and denote by  $$C_1:=\begin{bmatrix} u_{3,1} \widehat{x}\otimes \widehat{u} \\ v_{3,1} \widehat{y}\otimes \widehat{v}\end{bmatrix} , \; C_2:= \begin{bmatrix} u_{3,2} \widehat{x}\otimes \widehat{u} \\ v_{3,2} \widehat{y}\otimes \widehat{v}\end{bmatrix}\in \mathbb{C}^2\otimes (\mathbb{C}^{2})^{\otimes(k-3)}.$$
We can write $T$ as
$$ C_1\lambda +C_2\mu.$$
Call $X_3$ the matrix whose columns are given by $\widehat{x}$ and $\widehat{y}$ and denote by $X_4$ the matrix whose rows are given by $\widehat{u} $ and $\widehat{v}$. Therefore
\begin{align*}
C_1 = \begin{bmatrix} \widehat{x} & \widehat{y} \end{bmatrix}\begin{bmatrix} u_{3,1} & 0 \\ 0 & v_{3,1} \end{bmatrix} \begin{bmatrix} \widehat{u} \\ \widehat{v} \end{bmatrix}= X_3 \begin{bmatrix} u_{3,1}&0\\ 0 & v_{3,1}\end{bmatrix}X_4, \\
C_2 = \begin{bmatrix} \widehat{x} & \widehat{y} \end{bmatrix}\begin{bmatrix} u_{3,2} & 0 \\ 0 & v_{3,2} \end{bmatrix} \begin{bmatrix} \widehat{u} \\ \widehat{v} \end{bmatrix}= X_3  \begin{bmatrix} u_{3,2}&0\\ 0 & v_{3,2}\end{bmatrix}X_4. 
\end{align*}
Remark that $C_2$ is right invertible and denote by $C_2^{-1}$ its right inverse. Moreover $r(X_3)=r(X_4)=2$, therefore $X_3$ is invertible and there exists a right inverse of $X_4$ that we denote by $X_4^{-1}$. Thus
\begin{align*}
C_1C_2^{-1}&=\left( X_3 \begin{bmatrix} u_{3,1}&0\\ 0 & v_{3,1}\end{bmatrix}X_4\right)\left(X_3  \begin{bmatrix} u_{3,2}&0\\ 0 & v_{3,2}\end{bmatrix}X_4\right)^{-1}\\
&= X_3 \begin{bmatrix} \frac{u_{3,1}}{u_{3,2}}&0\\ 0 & \frac{v_{3,1}}{v_{3,2}}\end{bmatrix}X_3^{-1}.
\end{align*}
We have now an eigenvalue problem that we can easily solve to find $\widehat{x},\widehat{y}\in \mathbb{C}^2$.

\begin{remark}
When computing the concision process of $T$ with respect to the first factor of $\mathcal{T}_{n_1n_2,2^{k-2}}$, we concretely find a basis of $\hbox{Im}(\varphi_1)$. Therefore, after we found $\widehat{x},\widehat{y}\in \mathbb{C}^2$ with the above procedure, we can easily get back to $x,y\in \mathbb{C}^{n_1n_2}\cong \mathbb{C}^{n_1}\otimes \mathbb{C}^{n_2}$ and compute the rank of both $x,y$ seen as elements of $\mathbb{C}^{n_1}\otimes \mathbb{C}^{n_2}$.
\end{remark}
%

%

Let us sum up how to find a non-identifiable rank-3 tensor of at least $4$ factors in the following pseudo-algorithm.

\begin{algorithm}[H]
	\caption{(Non-identifiability with at least $4$ factors)}\label{part3algo} 
	\noindent 
\begin{flushleft}
\textbf{Input:} Concise tensor $ T=(t_{i_1,\dots, i_{k}})\in \mathcal{T}_{n_1,n_2,2^{k-2}}$, 
for some $k>3$, $2\leq n_1,n_2\leq 3$.\\
\textbf{Output:} A statement on whether $T$ either belongs to one of the six cases of non-identifiable rank-3 tensors or not.
\end{flushleft}
\begin{enumerate}\setcounter{enumi}{-1}
\item For all $i,j=1,\dots,k$ with $i\neq j$ denote by $\theta_{i,j}$ the reshape grouping the $i$-th and $j$-th factor of $\mathcal{T}_{n_1,\dots,n_{k}}$.
\item  Case $(n_1,n_2)=(2,2)$. 
\begin{itemize}
\item Case $k=4$. Test if $T\in \sigma_3(X_{1^4})\setminus \sigma_2(X_{1^4})$ (cf. \cite[Theorem 1.4]{Qieqsigma3} for the equations of the third secant variety and  \cite{LandMan} for the equations of the second secant variety). If the answer to the test is positive, the output is: \emph{$T$ is a non-identifiable rank-$3$ tensor}, otherwise the output is: \emph{$T $ is not on the list of non-identifiable rank-$3$ tensors}.
\item Case $k\geq 5$. For all $i=1,\dots,k-1$ and for all $j=i+1\dots,k$ follow this procedure:
\begin{itemize}

\item[$\bullet$] Test if $\theta_{i,j}(T)$ satisfies the equations of $\sigma_2(X_{3,1^{k-2}})$ and does not satisfy the equations of $\tau(X_{3,^{k-2}})$ (cf. \cite{LandMan}, \cite[Theorem 1.3]{OedingTan} for equations of both varieties). If $\theta_{i,j}(T)\in \sigma_2(X_{3,1^{k-2}})\setminus \tau(X_{3,1^{k-2}})$ then $\theta_{i,j}(T)$ is an identifiable rank-$2$ tensor. 

Make the concision process on the first factor of $\mathcal{T}_{3,1^{k-2}}$ and call $T'$ the resulting tensor. 

Consider $T'$ as a matrix pencil of $\mathbb{C}^2\otimes \mathbb{C}^2\otimes( (\mathbb{C}^2)^{\otimes(k-2)})$ with respect to the second factor
$$T'=\lambda C_1 + \mu C_2. $$
Find the eigenvectors $x,y\in \mathbb{C}^2$ of $C_1C_2^{-1}$ and then rewrite $x,y$ as elements of $\mathbb{C}^{4}\cong \mathbb{C}^2\otimes \mathbb{C}^2$ via $\theta^{-1}_{i,j}$. If $\{r(x),r(y) \}=\{ 1,2\}$ then the output is: \emph{$T$ is a non-identifiable rank-$3$ tensor corresponding to case \ref{5.} of Theorem \ref{newMAINTHM}}.
\item[$\bullet$] Else, if one of the previous conditions is not satisfied, then stop and restart with another $j$ (and another $i$ when necessary).

\end{itemize}
If the algorithm stops at some point when $i=k-1,j=k$ then break and the output is: \emph{$T$ is not on the list of non-identifiable rank-$3$ tensors}.

\end{itemize}

\item Case $(n_1,n_2)=(3,2)$.\\
For all $i=2,\dots,k-1$ follow this procedure:
\begin{itemize}
\item Test if $\theta_{1,i}(T)$ satisfies the equations of $\sigma_2(X_{5,1^{k-2}})$ and does not satisfy the equations of $\tau(X_{5,^{k-2}})$ (cf. \cite{LandMan}, \cite[Theorem 1.3]{OedingTan} for equations of both varieties). If $\theta_{1,i}(T)\in \sigma_2(X_{5,1^{k-2}})\setminus \tau(X_{5,1^{k-2}})$ then $\theta_{1,i}(T)$ is an identifiable rank-$2$ tensor. Reduce the first factor of $\mathcal{T}_{6,2^{k-2}}$ via concision, working now on $\mathcal{T}_{2^{k-1}}$ with $T'$. Consider $T'$ as a matrix pencil with respect to the second factor of $\mathbb{C}^2\otimes \mathbb{C}^2\otimes (\mathbb{C}^2)^{\otimes (k-3)}$, i.e. 
$$ T'=\lambda C_1 + \mu C_2.$$ 
Find the eigenvectors $x,y$ of $C_1C_2^{-1}$ and then rewrite $x,y$ as elements of $\mathbb{C}^6=\mathbb{C}^3\otimes 	\mathbb{C}^2$ via $\theta^{-1}_{1,i}$. If $\{ r(x),r(y)\}=\{ 2,1 \}$ the output is: \emph{$T$ is a non-identifiable rank-$3$ tensor}.
\item Else, if one of the previous conditions is not satisfied then stop and restart with another $i$.

\end{itemize}
If the algorithm stops at some point when $i=k$ then break and the output is: \emph{$T$ is not on the list of non-identifiable rank-$3$ tensors}.



	\item Case $(n_1,n_2)=(3,3)$.\\
	\begin{itemize}
	\item Test if $\theta_{1,2}(T)$ satisfies the equations of $\sigma_2(X_{8,1^{k-2}})$ and does not satisfy the equations of $\tau(X_{8,^{k-2}})$ (cf. \cite{LandMan}, \cite[Theorem 1.3]{OedingTan} for equations of both varieties). If $\theta_{1,2}(T)\in \sigma_2(X_{8,1^{k-2}})\setminus \tau(X_{8,1^{k-2}})$ then $\theta_{1,2}(T)$ is an identifiable rank-2 tensor. Reduce the first factor of $\mathcal{T}_{9,2^{k-2}}$ via the concision process, working now with $T'$ on $(\mathbb{C}^2)^{\otimes (k-1)}$. Consider $T'$ as a matrix pencil with respect to the second factor of $\mathbb{C}^2\otimes \mathbb{C}^2\otimes (\mathbb{C}^2)^{\otimes (k-3)}$, i.e. 
	$$ T'=\lambda C_1+\mu C_2.$$
	Find the eigenvectors $x,y$ of $C_1C_2^{-1}$ and then rewrite $x,y$ as elements of $\mathbb{C}^9\cong \mathbb{C}^3\otimes \mathbb{C}^3$ via $\theta^{-1}_{1,2}$. If $\{ r(x),r(y) \}=\{1,2 \}$ the output is: \emph{$T$ is a non-idenfitiable rank-$3$ tensor as in case \ref{5.}.}
	\item If one of these conditions is not satisfied then stop and the output is: \emph{$T$ is not on the list of non-identifiable rank-$3$ tensors}.
	\end{itemize}
\end{enumerate}
\end{algorithm}

	A code implementation in \texttt{Macaulay2} of the above algorithm is available at the repository website MathRepo of MPI-MiS
	via the link 
	\begin{center}
\url{https://mathrepo.mis.mpg.de/identifiabilityRank3tensors}.
\end{center}

\begin{example}
Let $ \mathcal{T}_{3,2,2,2}=\mathbb{C}^3\otimes \mathbb{C}^2\otimes \mathbb{C}^2\otimes \mathbb{C}^2$ and for all $j,k,\ell= 1,2$ and for all $i=1,2,3$ denote $e_{i,j,k,\ell}=e_i\otimes e_j\otimes e_k \otimes e_\ell$. To lighten the notation we also set $e_{i}e_{j}=e_i\otimes e_j$. Consider the tensor
\begin{align*}
T= &12 e_{1,1,1,1}+ 8e_{1,1,1,2}+ 6 e_{1,1,2,1}+4 e_{1,1,2,2}+30e_{1,2,1,1}+20e_{1,2,1,2}+15e_{1,2,2,1}+
\\&10e_{1,2,2,2}+8e_{2,1,1,1}+8e_{2,1,1,2}+5e_{2,1,2,1}+6e_{2,1,2,2}+35e_{2,2,1,1}+38e_{2,2,1,2}+\\
&23e_{2,2,2,1} +   30e_{2,2,2,2} +   16e_{3,1,1,1}   + 16e_{3,1,1,2}    +10e_{3,1,2,1}    +12e_{3,1,2,2}    +\\   &52e_{3,2,1,1}    +    64e_{3,2,1,2}   +   37e_{3,2,2,1}   +   54e_{3,2,2,2}.
\end{align*}
Let $\theta_{1,2}\colon \mathcal{T}_{3,2,2,2}\rightarrow \mathbb{C}^6\otimes \mathbb{C}^2\otimes \mathbb{C}^2$ be the reshape grouping together the first two factors of $\mathcal{T}_{3,2,2,2}$. Let $$ \theta_{1,2}(e_1e_1)=e_{1,1},\; \theta_{1,2}(e_1e_2)=e_{1,2},\; \theta_{1,2}(e_2e_1)= e_{2,1} ,\; \theta_{1,2}(e_2e_2) = e_{2,2} ,\; \theta_{1,2}(e_3e_1) = e_{3,1} ,\;  \theta_{1,2}(e_3e_2)=e_{3,2}$$ 
be a basis of $\mathbb{C}^6$ such that $\theta_{1,2}(T)$ can be written as
\begin{align*}
 \theta_{1,2}(T)= &12 e_{1,1}\otimes e_1e_1+ 8 e_{1,1} \otimes e_1e_2+ 6e_{1,1}\otimes e_2e_1+4 e_{1,1}\otimes e_2e_2+30e_{1,2}\otimes e_1 e_1+20e_{1,2}\otimes e_1 e_2+\\ &15 e_{1,2} \otimes e_2 e_1+
10 e_{1,2} \otimes e_2 e_2+8 e_{2,1}\otimes e_1 e_1+8 e_{2,1} \otimes e_1 e_2+5 e_{2,1} \otimes e_2 e_1+6 e_{2,1} \otimes e_2e_2+\\ &35 e_{2,2}\otimes e_1 e_1+38 e_{2,2} \otimes e_1 e_2+23 e_{2,2} \otimes e_2 e_1 +   30 e_{2,2} \otimes e_2 e_2 +   16 e_{3,1} \otimes e_1 e_1   + 16 e_{3,1} \otimes e_1e_2    +\\&10 e_{3,1} \otimes e_2 e_1   +12 e_{3,1} \otimes e_2 e_2    +52 e_{3,2} \otimes e_1 e_1    +    64 e_{3,2} \otimes e_1 e_2   +   37 e_{3,2} \otimes e_2e_1   +   54 e_{3,2} \otimes e_{2}e_2.
\end{align*}
One can verify that $\theta_{1,2}(T)\in \sigma_2(X_{5,1^{3}})\setminus \tau(X_{5,1^{3}})$, therefore we can continue our procedure by considering the matrix associated to the first flattening $\varphi_1\colon (\mathbb{C}^2\otimes \mathbb{C}^2 )^*\rightarrow\mathbb{C}^6$ of $T$:
$$A=
\begin{bmatrix}
12 &   8& 6 & 4 \\
30 &  20& 15 & 10   \\
8 &   8 & 5 & 6    \\
35 &  38 & 23 & 30     \\
16 &  16 & 10 & 12    \\
52& 64 & 37 & 54
\end{bmatrix}.
$$
The rank of $A$ is $2$ and we take the first two columns $\widehat{x},\widehat{y} $ of $A$ as linearly independent vectors of $Im(\varphi_1)$ and rewrite all the others as a linear combinations of $\widehat{x},\widehat{y}$. Denote by $T'$ the resulting tensor
$$ 
T'= \widehat{x} \otimes e_1e_1 + \widehat{y} \otimes e_1e_2 + \left(\frac{1}{4}\widehat{x} +\frac{3}{8} \widehat{y} \right) \otimes e_2 e_1 + \left( -\frac{1}{2} \widehat{x} + \frac{5}{4}\widehat{y}  \right) \otimes e_2 e_2.
$$
 Let us consider now $T'\in \mathbb{C}^2\otimes \mathbb{C}^2 \otimes \mathbb{C}^2$ as a matrix pencil with respect to the second factor
$$ 
T'=\lambda \begin{bmatrix} 1 & 0\\ 0 & 1\end{bmatrix} + \mu \begin{bmatrix}1/4 & -1/2 \\ 3/8 & 5/4 \end{bmatrix}=\lambda C_1 + \mu C_2. 
$$
It is easy to see that the eigenvectors of 
 $$
C_1C_2^{-1}=\begin{bmatrix} 10/4 & 1\\ -3/4 &1/2 \end{bmatrix} 
$$ 
are $ x=(-2,1) $ and $y=(-2/3, 1)$, i.e.
\begin{align*}
x=&-2\widehat{x}+\widehat{y} = -(16e_{1,1}+40e_{1,2}+8e_{2,1}+32e_{2,2}+16e_{3,1}+40e_{3,2})=-\begin{bmatrix} 16 & 40 \\
8 & 32 \\
16 & 40 \end{bmatrix}
\end{align*}
and
\begin{align*}
y=&-2/3\widehat{x} + \widehat{y} = 
 8/3e_{2,1}+44/3e_{2,2}+16/3e_{3,1}+88/3e_{3,2}=
\begin{bmatrix}
0&0  \\
8/3 & 44 /3 \\
16/3 & 88 /3
\end{bmatrix}.
\end{align*}
It is easy to see that $r(x)=2$ and $r(y)=1$, therefore $T$ is a non-identifiable rank-3 tensor as in case \ref{5.}. Indeed by multiplying $T$ with 
$$
g= \left( \begin{bmatrix} 1/2 & -1 & 1/2 \\0 & 2 & -1 \\ -1/2 & 0 & 1/2 \end{bmatrix}, \begin{bmatrix} 1 & 0 \\ -1/3 & 1/3\end{bmatrix}, \begin{bmatrix} 1&-1\\-1&2\end{bmatrix}, \begin{bmatrix} 1/2& -1/4 \\-1/2 & 3/4\end{bmatrix} \right) 
$$
we get $$ T=e_1\otimes e_1 \otimes e_1 \otimes e_1 +e_2 \otimes e_2 \otimes e_1 \otimes e_1 + e_3 \otimes (2e_1+3e_2) \otimes e_2 \otimes e_2 .$$
\end{example}
\begin{remark}
Since we already considered all concise spaces of tensors related to all non-identifiable rank-3 tensors of Theorem \ref{newMAINTHM}, any other concise tensor space will not be considered. Therefore, for any other concise space, the output of the algorithm will be \emph{$T$ is not on the list of non-identifiable rank-$3$ tensors}.
\end{remark}

We conclude by collecting all together the steps made until now. 

\begin{algorithm}
	\caption{(Non-identifiable rank-3 tensors)}\label{algoalgo}
\begin{flushleft}
\textbf{Input:} Tensor $ T=(t_{i_1,\dots, i_{k}})\in  \mathbb{C}^{n_1} \otimes \cdots \otimes \mathbb{C}^{n_k}$, for some $k\geq 3$.\\
\textbf{Output:} A statement on whether $T$ belongs to one of the six cases of non-identifiable rank-3 tensors or not.
\end{flushleft}
\begin{enumerate}
\item Compute the concise tensor space $\mathcal{T}_{n'_1,\dots,n'_{k'}}$ of $T$.
\item
\hskip-.4cm\begin{tabular}{l p{16cm} }
\ldelim\{{5}{.1cm}[]
  & If $k'=3$ run Algorithm \ref{part2algo}. \\
 &\\
  & Else if $\mathcal{T}_{n'_1,\dots,n'_{k'}}\in \{ \mathcal{T}_{3,2^{k'-1}}, \mathcal{T}_{3,3,2^{k'-2}},\mathcal{T}_{2^{k'}} \}$, where $ k'\geq 4$, run Algorithm \ref{part3algo}.\\
& \\
& Else the output will be \emph{$T$ is not on the list of Theorem \ref{newMAINTHM}}.
\end{tabular}
\end{enumerate}

\end{algorithm}

\subsection*{Acknowledgement}
This article is part of my Ph.D thesis. I thank my supervisor Alessandra Bernardi for her guidance as well as the many helpful discussions. I would also like to thank Edoardo Ballico, Luca Chiantini and Alessandro Gimigliano for their constructive inputs and Reynaldo Staffolani for the help with the coding part.

\section{Appendix (with E. Ballico and A. Bernardi)}
The purpose of this appendix is to fix an imprecision in the statement of Proposition 3.10 of \cite{BBS}.
Originally stated for $k\geq 3$ factors, Proposition 3.10 of \cite{BBS} describes a family of non-identifiable rank-3 tensors for an arbitrary number of factors and it represents the last item of the classification \cite[Theorem 7.1]{BBS} of identifiable rank-3 tensors. Since Theorem 7.1 of \cite{BBS} is the theoretic basis on which the present paper is based on, we decided to report here the rectification of \cite[Proposition 3.10]{BBS}.

The main issue is that the case $Y_{2,1,1}$ is already completely described by \cite[Examples 3.6 and 3.7]{BBS}, so it does not fall into \cite[case 6, Theorem 7.1]{BBS} but it is already included in cases $4$ and $5$ of the same theorem. In order to fix Theorem 7.1 as stated in \cite{BBS} is therefore sufficient to remove the possibility of $k=3$ and $(n_1,n_2,n_3)=(2,1,1)$from it, case 6, which will remain the same for $k\geq 4$ only.
The statement of \cite[Theorem 7.1]{BBS} becomes as follows.

\begin{theorem}(\protect{\cite[Theorem 7.1]{BBS} revised})\label{newMAINTHM}
	Let $q\in \langle  X_{n_1,\dots,n_k}\rangle $ be a concise rank-3 tensor. Denote by $\mathcal{S}(Y_{n_1,\dots,n_k},q)$ the set of all subsets of $Y_{n_1,\dots,n_k}$ computing the rank of $q$. The rank-3 tensor $q$ is identifiable except in the following cases:
		\begin{enumerate}[label=\alph*)]
			\item\label{1.} $q$ is a rank-3 matrix, in this case $\dim(\mathcal{S}(Y_{2,2},q))=6$;
			\item\label{2.} $q$ belongs to a tangent space of the Segre embedding of $Y_{1,1,1}=\mathbb{P}^1\times \mathbb{P}^1\times\mathbb{P}^1$ and in this case $\dim(\mathcal{S}(Y_{1,1,1,1},q))\geq 2 $;
			\item\label{6.} $q$ is an order-4 tensor of $\sigma_3^0(Y_{1,1,1,1})$ with $Y_{1,1,1,1}=\mathbb{P}^1\times\mathbb{P}^1\times\mathbb{P}^1\times\mathbb{P}^1$, in this case $\dim(\mathcal{S}(Y,q))\geq 1 $. 
			\item\label{3.} $q$ is as in \cite[Example 3.6]{BBS} where $Y_{2,1,1}=\mathbb{P}^2\times\mathbb{P}^1\times\mathbb{P}^1$, in this case $\dim(\mathcal{S}(Y_{2,1,1},q))=3 $;
			\item\label{4.} $q$ is as in \cite[Example 3.7]{BBS} where $Y_{2,1,1}=\mathbb{P}^2\times\mathbb{P}^1\times\mathbb{P}^1$, in this case $\dim \mathcal{S}(Y_{2,1,1},q) =4$;
			\item\label{5.} $q $ is as in Proposition \ref{prop:new statement} where $ Y_{n_1,\dots,n_k}=\mathbb{P}^{n_1}\times \cdots \times \mathbb{P}^{n_k}$ is such that either $k\ge 4$,  $n_i\in \{1,2\}$ for $i=1,2$, $n_i=1$ for $i>2$, or $k=3$ and $(n_1,n_2,n_3)=(2,2,1)$. In this case $\dim(\mathcal{S}(Y_{n_1,\dots,n_k},q))\geq 2 $ and if $ n_1+n_2+k\geq 6$ then $\dim(\mathcal{S}(Y_{n_1,\dots,n_k},q))=2 $. 
			
		\end{enumerate}
	\end{theorem}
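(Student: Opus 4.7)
The plan is to reduce, via the Concision Lemma \ref{concision}, to the case where $T$ is concise in $\mathcal{T}_{n_1,\dots,n_k}=\mathbb{C}^{n_1}\otimes\cdots\otimes\mathbb{C}^{n_k}$, and then classify the possible concise tensor spaces that can host a non-identifiable rank-$3$ tensor. Since $r(T)=3$, each flattening $T_\ell$ has rank $\leq 3$, so applying concision factor by factor we may assume $n_i\in\{2,3\}$ for all $i$ and $k\geq 2$. The case $k=2$ is trivial and yields case \ref{1.}. So the bulk of the argument is the combinatorial case split on the $k$-tuple $(n_1,\dots,n_k)$ (each $n_i\in\{2,3\}$), followed by a geometric/algebraic identifiability argument in each subcase.

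For $k=3$ the possibilities are $(2,2,2)$, $(3,2,2)$, $(3,3,2)$, $(3,3,3)$. In the $(2,2,2)$ case the second secant variety of $X_{1,1,1}$ fills $\mathbb{P}^7$, so $T$ of rank $3$ lies on the tangential variety, giving case \ref{2.}. The $(3,2,2)$ case is the pencil format $\mathbb{C}^2\otimes\mathbb{C}^2\otimes\mathbb{C}^3$, and here I invoke Theorem \ref{jajaeco} together with the Kronecker normal form classification of \cite{BL}: the only concise rank-$3$ orbits correspond precisely to the two pencil forms isolated in Lemmas \ref{lem1} and \ref{lemmac4}, yielding cases \ref{3.} and \ref{4.}. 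For $(3,3,2)$ one again uses the pencil/Kronecker classification (see the two normal forms in \eqref{mp}) to single out case \ref{5.} at $k=3$; here the non-identifiable form corresponds exactly to $Y'=\mathbb{P}^1\times\mathbb{P}^1\times\{w\}\subset Y_{2,2,1}$ in Proposition \ref{prop:new statement}. For $(3,3,3)$ one appeals to the generic identifiability results for $\mathbb{C}^3\otimes\mathbb{C}^3\otimes\mathbb{C}^3$ (Kruskal) together with the non-existence of exceptional orbits in this format, so that every concise rank-$3$ tensor is identifiable.

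For $k=4$ the concise format is $(1,1,1,1)$ (i.e.\ $\mathcal{T}_{2^4}$) or involves at least one $\mathbb{C}^3$. The defectivity of $\sigma_3(X_{1,1,1,1})$ from \cite[Theorem 4.5]{AOP} together with the rank description of $\sigma_3(X_{1^4})\setminus\sigma_2(X_{1^4})$ in \cite[Theorem 1.8]{BB3sec} produces case \ref{6.}; all other $k=4$ concise rank-$3$ tensors either fall under case \ref{5.} (with $n_1,n_2\in\{1,2\}$, $n_3=n_4=1$) or are shown to be identifiable by a direct argument via reshape and Lemma \ref{lemmaalgo}. For $k\geq 5$ the same reshape strategy applies: one partitions $\{1,\dots,k\}$ into two blocks, writes $T=T_1+T_2$ with $T_1$, $T_2$ indecomposable, and uses Kruskal's criterion in the reshaped format to conclude that the only obstruction to identifiability is the $Y'$-configuration of Proposition \ref{prop:new statement}, which is case \ref{5.}.

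The main obstacle is the correction itself: in the original \cite[Theorem 7.1]{BBS}, case f was phrased to allow $(n_1,n_2,n_3)=(2,1,1)$, but every concise rank-$3$ tensor in $\mathcal{T}_{3,2,2}$ already appears in \cite[Examples 3.6 and 3.7]{BBS}, which are cases \ref{3.} and \ref{4.}. The clean way to justify the removal is to observe, via Corollaries \ref{diffcases_pencil=} and \ref{cor:caso8bl}, that the Kronecker classification in $\mathbb{C}^2\otimes\mathbb{C}^2\otimes\mathbb{C}^3$ exhausts the concise rank-$3$ orbits; hence any tensor arising from the $Y'$-construction on $Y_{2,1,1}$ is strictly equivalent to one of the two pencils of Lemmas \ref{lem1} and \ref{lemmac4}, so its inclusion in case \ref{5.} would be redundant. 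This consolidates the classification and, together with the dimension counts of the sets $\mathcal{S}(Y_{n_1,\dots,n_k},q)$ carried out in \cite{BBS}, yields the precise fiber-dimension estimates stated in each item of Theorem \ref{newMAINTHM}.
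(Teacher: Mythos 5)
Your outline takes a genuinely different route from the paper, and it has real gaps. The paper does not reprove the classification: it treats Theorem \ref{newMAINTHM} as a \emph{correction} of \cite[Theorem 7.1]{BBS} and leans on the original proof for everything except the repaired item. The actual mathematical content of the paper's argument is (i) the new Lemma \ref{k=3unico}, which pins down $\mathcal{S}(Y_{2,2,1},q)=\{\{p\}\cup A\}_{A\in \mathcal{S}(Y',q')}$ for $k=3$, $(n_1,n_2,n_3)=(2,2,1)$ by a cohomological residual-scheme argument; (ii) the revised Proposition \ref{prop:new statement}, whose proof replaces the faulty induction base $\mathbb{P}^2\times\mathbb{P}^1\times\mathbb{P}^1$ by $\mathbb{P}^2\times\mathbb{P}^1\times\mathbb{P}^1\times\mathbb{P}^1$, deduced from the $(\mathbb{P}^1)^4$ case via linear projection from a point of $\mathbb{P}^2$; and (iii) the verification, via the orbit classification of $\mathbb{C}^2\otimes\mathbb{C}^2\otimes\mathbb{C}^3$ and Corollaries \ref{diffcases_pencil=} and \ref{cor:caso8bl}, that the excised $(2,1,1)$ case is already absorbed into items \ref{3.} and \ref{4.}. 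You correctly identify (iii), but (i) and (ii) are absent from your sketch, and they are exactly what makes item \ref{5.} (including the description of $\mathcal{S}$ and the dimension counts) true; you cannot defer those to ``the dimension counts carried out in \cite{BBS}'', since the statement being repaired is precisely the one you would be citing.

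The more serious gap is in your from-scratch classification for $k\ge 4$. Showing that the tensors of Proposition \ref{prop:new statement} are non-identifiable is the easy direction; the hard direction is that \emph{no other} concise rank-3 tensor with $k\ge 4$ factors (outside $\sigma_3(X_{1^4})$) fails to be identifiable. Your proposed argument --- partition the factors into two blocks, write $T=T_1+T_2$ with $T_1,T_2$ ``indecomposable'', and apply Kruskal in the reshaped format --- is not a proof: such a splitting is not canonical, Kruskal applied to a reshape only controls decompositions of the reshaped tensor (a rank-one term of $\theta_{i,j}(T)$ need not come from a rank-one term of $T$), and Lemma \ref{lemmaalgo} only characterizes tensors already assumed to lie in item \ref{5.}; it does not exclude other non-identifiable configurations. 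Similarly, for $(3,3,2)$ your appeal to the two normal forms in \eqref{mp} is circular as written, since the paper derives that shortlist from Theorem \ref{newMAINTHM} together with \cite[Table 3]{BL} rather than the other way around; to make it non-circular you would have to prove independently that these are the only concise rank-3 Kronecker classes and that exactly one of them is non-identifiable. As it stands, in the cases where the original \cite{BBS} induction does the real work, your proposal asserts the conclusion rather than proving it.
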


This result will be clear after having revised \cite[Proposition 3.10]{BBS}.


Before proceeding, we need to recall the following.

\begin{definition}
	Given $q\in \mathbb{P}^N=\langle X_{n_1,\dots,n_k} \rangle $ the \emph{space of solution} of $q$ with respect to $X_{n_1,\dots,n_k}$ is
	$$
	\mathcal{S}(Y_{n_1,\dots,n_k},q)=\{ A\subset Y_{n_1,\dots,n_k} \colon \# A=r(q) \hbox{ and } q \in \langle \nu(A) \rangle  \}.
	$$
	\end{definition}

\begin{notation}\label{pi}
	We denote the projection on the $i $-th factor as $$\pi_i: Y_{n_1,\dots, n_k} \longrightarrow \mathbb{P}^{n_i} .$$
\end{notation}
	Let us start by considering the case $k=3$ and $(n_1,n_2,n_3)=(2,2,1)$.

	\begin{lemma}(\protect{Case $k=3$})\label{k=3unico} 
		Let $Y_{2,2,1} =\mathbb{P}^2\times \mathbb{P}^2\times \mathbb{P}^1$. Fix two lines $L, R\subset \mathbb{P}^2$, a point $o\in \mathbb{P}^1$ and set $Y':= L\times R\times \{o\}\subset Y_{2,2,1}$. Take $p\in Y_{2,2,1}$ with $\pi _i(p)\notin \pi_i(Y')$ for $i=1,2,3$, i.e. assume that $Y_{2,2,1}$ is the minimal multiprojective space containing $p\cup Y'$. Fix $q'\in \langle \nu(Y')\rangle$ of rank $2$ and $q\in \langle \{ \nu(p),q'\}\rangle$ of rank $3$. Then $\mathcal{S}(Y_{2,2,1},q) =\{\{p\}\cup A\}_{A\in \mathcal{S}(Y',q')}$. 

\end{lemma}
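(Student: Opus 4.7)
The inclusion $\mathcal{S}(Y_{2,2,1},q)\supseteq \{\{p\}\cup A : A\in \mathcal{S}(Y',q')\}$ is immediate: for $A\in \mathcal{S}(Y',q')$, the containment $q\in \langle \nu(p),q'\rangle \subseteq \langle \nu(\{p\}\cup A)\rangle$ exhibits $q$ as a sum of three rank-$1$ tensors, and this is minimal since $r(q)=3$ by hypothesis. For the converse I would exploit the hypothesis $\pi_i(p)\notin \pi_i(Y')$ to fix bases $\{e_1,e_2,e_3\}$, $\{f_1,f_2,f_3\}$, $\{g_1,g_2\}$ with $L=\langle e_1,e_2\rangle$, $R=\langle f_1,f_2\rangle$, $o=[g_1]$, and $\pi_1(p)=[e_3]$, $\pi_2(p)=[f_3]$, $\pi_3(p)=[g_2]$. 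In these coordinates $q$ takes the normal form
$$q = T'\otimes g_1 + \lambda\, e_3\otimes f_3\otimes g_2,$$
with $T'\in L\otimes R$ of rank $2$ and $\lambda\in \mathbb{C}^\times$. The two third-factor slices of $q$ are the rank-$2$ tensor $T'\in L\otimes R$ (the $g_1$-slice) and the rank-$1$ tensor $\lambda\,e_3\otimes f_3\notin L\otimes R$ (the $g_2$-slice); this asymmetry drives the argument.

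For any rank-$3$ decomposition $B=\{b_1,b_2,b_3\}\in \mathcal{S}(Y_{2,2,1},q)$ with $b_i=([x_i],[y_i],[a_ig_1+c_ig_2])$, the identity $q=\sum_i \nu(b_i)$ splits into
$$T' = \sum_{i=1}^3 a_i\, x_i\otimes y_i, \qquad \lambda\, e_3\otimes f_3 = \sum_{i=1}^3 c_i\, x_i\otimes y_i.$$
I would then do a case analysis on $N:=\#\{i : c_i\neq 0\}$, ruling out $N=0,2,3$. The case $N=0$ is excluded because the right-hand side of the second equation is nonzero. For $N=3$, set $\gamma_i:=a_i/c_i$ and $M_i:=c_i x_i\otimes y_i$, so that $\sum_i M_i=\lambda e_3\otimes f_3$ and $T'=\sum_i \gamma_i M_i$. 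For every $j$, $T'-\gamma_j\lambda e_3\otimes f_3 = \sum_{i\neq j}(\gamma_i-\gamma_j)M_i$ has rank $\leq 2$; but identifying $\mathbb{C}^3\otimes \mathbb{C}^3$ with $3\times 3$ matrices so that $L\otimes R$ is the top-left $2\times 2$ block and $\mathbb{C}\, e_3\otimes f_3$ is the $(3,3)$-entry, the left-hand side is block-diagonal with blocks of ranks $2$ and $[\gamma_j\neq 0]$. Hence every $\gamma_j=0$, forcing $T'=0$, contradicting $r(T')=2$. For $N=2$, say $c_1,c_2\neq 0$ and $c_3=0$, the rank-$1$ identity $c_1 x_1\otimes y_1+c_2 x_2\otimes y_2=\lambda e_3\otimes f_3$ forces $x_1\parallel x_2\parallel e_3$ or (symmetrically) $y_1\parallel y_2\parallel f_3$; plugging back into the $g_1$-slice equation, the $L\otimes R$-part of $T'$ reduces to $a_3\,x_3^L\otimes y_3$ (respectively $a_3\,x_3\otimes y_3^R$), of rank at most $1$, again contradicting $r(T')=2$.

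The remaining case $N=1$ yields the conclusion. After relabeling, $c_1\neq 0$ and $c_2=c_3=0$; the $g_2$-slice forces $[x_1]=[e_3]$, $[y_1]=[f_3]$ (and then $c_1=\lambda$ after rescaling), while $c_2=c_3=0$ forces $\pi_3(b_2)=\pi_3(b_3)=o$. Decomposing $x_i=x_i^L+\alpha_i e_3$ and $y_i=y_i^R+\beta_i f_3$ for $i=2,3$ and expanding
$$T' = a_1\, e_3\otimes f_3 + a_2\, x_2\otimes y_2 + a_3\, x_3\otimes y_3$$
in the direct-sum decomposition $\mathbb{C}^3\otimes \mathbb{C}^3 = (L\otimes R)\oplus (L\otimes \langle f_3\rangle)\oplus (\langle e_3\rangle \otimes R)\oplus \langle e_3\otimes f_3\rangle$, the vanishing of the last three components (forced by $T'\in L\otimes R$) gives the relations
$$a_1+a_2\alpha_2\beta_2+a_3\alpha_3\beta_3=0,\quad a_2\beta_2\,x_2^L+a_3\beta_3\,x_3^L=0,\quad a_2\alpha_2\,y_2^R+a_3\alpha_3\,y_3^R=0,$$
together with $T'=a_2\,x_2^L\otimes y_2^R+a_3\,x_3^L\otimes y_3^R$. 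Since $r(T')=2$ forces $a_2,a_3\neq 0$ and both $\{x_2^L,x_3^L\}$, $\{y_2^R,y_3^R\}$ to be linearly independent, the middle two relations give $\alpha_2=\alpha_3=\beta_2=\beta_3=0$, and then the first gives $a_1=0$. Hence $b_1=p$, $\{b_2,b_3\}\subset L\times R\times\{o\}=Y'$, and $T'=a_2\,x_2\otimes y_2+a_3\,x_3\otimes y_3$ is a rank-$2$ decomposition of $q'$ supported on $Y'$, so $\{b_2,b_3\}\in \mathcal{S}(Y',q')$, as required.

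The main obstacle is the case analysis for $N\geq 2$: the delicate point is combining the rank-$1$ constraint from the $g_2$-slice with the coordinate constraint $T'\in L\otimes R$ from the $g_1$-slice to derive a rank drop in $T'$ that is incompatible with $r(T')=2$. The key leverage is that $L\otimes R$ and $\mathbb{C}\,e_3\otimes f_3$ occupy complementary pieces of $\mathbb{C}^3\otimes \mathbb{C}^3$, so the two slicing equations cannot be reconciled once more than one $c_i$ is nonzero.
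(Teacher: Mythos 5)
Your proof is correct, but it takes a genuinely different route from the paper. The paper's argument is cohomological in the style of the rest of \cite{BBS}: it fixes a general known solution $A'\cup\{p\}$, sets $S:=A\cup E$ for a putative second solution $E$, and repeatedly applies the residue lemma \cite[Lemma 1.13]{BBS} with respect to divisors $H\in|\mathcal{I}_o(0,0,1)|$ and auxiliary divisors in $|\mathcal{O}(0,1,0)|$, deriving contradictions from vanishing statements $h^1(\mathcal{I}_E(1,0,0))=h^1(\mathcal{I}_E(0,1,0))=0$ that follow from concision. You instead choose adapted bases, put $q$ in the normal form $T'\otimes g_1+\lambda\,e_3\otimes f_3\otimes g_2$, slice along the third factor, and run a case analysis on the number $N$ of decomposition points with nonzero $g_2$-component, exploiting that $L\otimes R$ and $\langle e_3\otimes f_3\rangle$ sit in complementary blocks of $\mathbb{C}^3\otimes\mathbb{C}^3$. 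Both arguments are complete; each step of yours checks out (in particular the $N=3$ block-diagonal rank count, the $N=2$ collapse of $T'$ to rank $\le 1$ after projecting onto $L\otimes\mathbb{C}^3$ or $\mathbb{C}^3\otimes R$, and the $N=1$ extraction of the relations forcing $\alpha_i=\beta_i=0$ and $a_1=0$). What the paper's approach buys is uniformity: the same $h^1$/residue machinery drives the induction for $k\ge 4$ in Proposition \ref{prop:new statement}, so the $k=3$ case is handled with no new tools. What your approach buys is a self-contained, elementary, and fully explicit verification that also exhibits the structure of an arbitrary solution $B$ in coordinates; its drawback is that the explicit slicing is tailored to this specific format and would need to be reworked to serve as a base case feeding into the higher-$k$ induction.
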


\begin{proof}
	Fix a solution $E\in \mathcal{S}(Y_{2,2,1},q)$. Concision gives $\langle \pi _1(E)\rangle =\langle \pi _2(E)\rangle =\mathbb{P}^2$ and hence, since $\deg(E)=3$, $h^1(\mathcal{I}_E(1,0,0)) =  h^1(\mathcal{I}_E(0,1,0))  =0$. Fix a general $A'\in \mathcal{S}(Y',q')$ and set $A:= A'\cup \{p\}\in \mathcal{S}(Y_{2,2,1},q)$ (because we assume that $q$ has rank $3$). Assume by contradiction that $E$ is not of the form $B \cup \{p\}$, for some $B\in \mathcal{S}(Y',q')$. 
	
	Notice that, for a fiexed $E$, the generality of $A'$ implies that $A'\cap E=\emptyset$.
	Call $S:= A\cup E$ and set $\{H\}:= |\mathcal{I}_o(0,0,1)|$. Since $A'$ is a solution of $q'$ then $\pi_3(A)=\{o\}$, therefore $A'\subset H$. Moreover, since $\pi_3(p)\neq \pi_3(o)$, then $A\cap H =A'$ and concision gives $A\nsubseteq H$. 
	The residue of $S$ with respect to $H$ is $S\setminus S\cap H=\{p\}\cup (E\setminus E\cap H)$ and since $S\nsubseteq H $, by  \cite[Lemma 1.13]{BBS} either $h^1(\mathcal{I}_{(E\setminus E\cap H)\cup \{p\}}(1,1,0)) >0$ or $E\setminus E\cap H =\{p\}$.
	\begin{itemize}
		\item Assume $h^1(\mathcal{I} _{(E\setminus E\cap H)\cup \{p\}}(1,1,0)) >0$. At the beginning of this proof we have already remarked that if $p\in E$ then $h^1(\mathcal{I}_E(1,0,0)) =h^1(\mathcal{I}_E(0,1,0))$ $ =0$ and $\deg(E)=3$; by this reason it is not possible that $h^1(\mathcal{I} _{(E\setminus E\cap H)\cup \{p\}}(1,1,0)) >0$. Thus the assumption $h^1(\mathcal{I} _{(E\setminus E\cap H)\cup \{p\}}(1,1,0)) >0$ implies that $p\notin E$. 
		Even if $p\notin E$ we do not know if for example $\pi _2(p) \in \pi_2(E)$ or not. 
		
		Assume for the moment that $\pi _2(p)\in \pi_2(E)$ and, to fix the ideas, write $E=\{u,v,w\}$ with $\pi _2(u)=\pi _2(p)$. Take $M\in |\mathcal{I}_{\{u,v\}}(0,1,0)|$. We have $S\cap (H\cup M) =S\setminus \{w\}$, because we remark that $H\supset A'$.
		Since $h^1(\mathcal{I} _w(1,0,0)) =h^1(\mathcal{I}_{\Res_{H\cup M}(S) }(1,0,0))=0$, by \cite[Lemma 1.13]{BBS} we would have that $w\in H\cup M$ which is a contradiction. 
		
		So $\pi_2(p)$ cannot belong to $\pi_2(E)$; but if this is the case, a general $D\in |\mathcal{I}_p(0,1,0)|$ does not intersect $E$ since $\mathcal{O}_{\mathbb{P}^2}(1)$ is very ample. 
		Thus $S\setminus S\cap (H\cup D) \ne \emptyset$ and moreover $S\setminus S\cap (H\cup D)\subseteq E$. As before, since $h^1(\mathcal{I}_E(1,0,0)) =0$, by \cite[Lemma 1.13]{BBS} we get a contradiction. Therefore it is absurd both that $\pi_2(p)\in \pi_2(E) $ and that $\pi_2(p)\notin \pi_2(E)$, so we have to conclude that also the hypothesis $h^1(\mathcal{I} _{(E\setminus E\cap H)\cup \{p\}}(1,1,0)) >0$ was absurd.
		
		\item  Assume now that $E\setminus E\cap H =\{p\}$, i.e. assume $E =\{p\}\cup E'$ with $E'\subset H$ and $\deg(E')=2$.  Note that $S\setminus (S\cap H)=\Res_H(S)=\{p\}$ and that $h^1(\mathcal{I} _p(1,1,0) =0$. Hence, by \cite[Lemma 1.13]{BBS}, we get that $S\subset H$ and therefore we get a contradiction with the autarky assumption because the minimal multiprojective space containing $q$ is $\mathbb{P}^2\times \mathbb{P}^2\times \mathbb{P}^1$. Therefore it is also not possible that $E=\{p\}\cup E'$ with $E'\subset H$.   
	\end{itemize}
	Thus $E$ is of type $\{p\}\cup A$ for some $A\in \mathcal{S}(Y',q')$ and this concludes the proof of the claim.
\end{proof}

Now we are ready to present the new statement of \cite[Proposition 3.10]{BBS}.
\begin{proposition}(\protect{\cite[Proposition 3.10]{BBS} revised})\label{prop:new statement}
	Let $Y':=\mathbb{P}^1\times \mathbb{P}^1\times \{u_3\}\times \cdots \times \{ u_k\}$ be a proper subset of $Y_{n_1,\dots,n_k}=\mathbb{P}^{n_1}\times \cdots \times \mathbb{P}^{n_k}$ where we assume either $k\geq 4$ or $k=3$ and $(n_1,n_2,n_3)\neq (2,1,1)$. Take $q'\in \langle \nu(Y_{n_1,\dots,n_k})\setminus \nu(Y')\rangle $, $A\in \mathcal{S}(Y',q')$ and $p\in Y_{n_1,\dots,n_k}\setminus Y'$. Assume that $Y_{n_1,\dots,n_k}$ is the minimal multiprojective space containing $A\cup \{p\}$ and take $q\in \langle \{q',\nu(p) \} \rangle \setminus \{q',\nu(p)\}$.
	\begin{enumerate}
		\item $\sum_{i=1}^k n_i\geq4$;
	 $n_1,n_2\leq 2$, $n_3, \ldots , n_k\leq 1$;
 if $k\geq 3 $ then $ r_{\nu (Y_{n_1,\dots, n_k})}(q)>1$.
	   \item\label{f2} $r_{\nu(Y_{n_1,\dots ,n_k})}(q)=3$ and $\mathcal{S}(Y_{n_1,\dots ,n_k},q)=\{ \{p\} \cup A\}_{A\in \mathcal{S}(Y',q')}$.
	
	\item\label{f3} $\nu(Y_{n_1,\dots ,n_k})$ is the concise Segre of $ q$.
	
		\end{enumerate}
\end{proposition}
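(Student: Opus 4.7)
The plan is to establish the three items in turn, with item (2) forming the heart of the proof.

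Item (1) is essentially bookkeeping from the minimality hypothesis on $Y_{n_1,\ldots,n_k}$. For each $i\geq 3$, $\pi_i(A)=\{u_i\}$, so $\pi_i(A\cup\{p\})$ consists of at most two points and spans at most $\mathbb{P}^1$; minimality forces $n_i\leq 1$, and since $n_i\geq 1$ one obtains $n_i=1$ and $\pi_i(p)\neq u_i$. For $i=1,2$, $|\pi_i(A\cup\{p\})|\leq 3$ yields $n_1,n_2\leq 2$. For $\sum n_i\geq 4$: if $k\geq 4$ then $\sum n_i\geq 1+1+(k-2)\geq 4$; if $k=3$, the hypothesis $(n_1,n_2,n_3)\neq (2,1,1)$ together with $Y'\subsetneq Y$ forces $(n_1,n_2,n_3)=(2,2,1)$ and $\sum n_i=5$. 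Finally, $r(q)>1$ because $r(q)=1$ would mean $q=\nu(x)$, placing $q'$ on $\langle \nu(x),\nu(p)\rangle$ and giving $q'$ a rank-$2$ decomposition $\{x,p\}$; the autarky of $q'\in \langle\nu(Y')\rangle$ then forces $\{x,p\}\subset Y'$, contradicting $p\notin Y'$. Item (3) follows quickly: the concise tensor space of $q$ is contained in the multiprojective hull of any decomposition, in particular of $\{p\}\cup A$, which by the minimality hypothesis equals $Y_{n_1,\ldots,n_k}$; conversely, the image of each $i$-th flattening of $q=q'+c\nu(p)$ contains the $i$-th projections of $A$ and of $p$, whose joint span is $\mathbb{P}^{n_i}$ again by minimality, so $\nu(Y_{n_1,\ldots,n_k})$ is the concise Segre of $q$.

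For the existence half of item (2), $r(q)\leq 3$ is immediate from $q=\nu(a_1)+\nu(a_2)+c\nu(p)$. For $r(q)\geq 3$, I rule out $r(q)=2$. If some $n_j=2$, a rank-$2$ decomposition $\{x_1,x_2\}$ has $\pi_j$-span of dimension $\leq 2$, contradicting the rank-$3$ flattening at factor $j$ guaranteed by item (3). In the remaining case $n_1=n_2=1$, so $Y=(\mathbb{P}^1)^k$ with $k\geq 4$, suppose $q=\nu(x_1)+\nu(x_2)$. Decomposing $V_3=\langle u_3,p_3\rangle$ and applying the dual functionals $u_3^\ast$ and $p_3^\ast$ to factor $3$ of $q$ extracts two sub-tensors of $V_1\otimes V_2\otimes V_4\otimes\cdots\otimes V_k$: the rank-$2$ tensor $U=(a_1\otimes b_1+a_2\otimes b_2)\otimes u_4\otimes\cdots\otimes u_k$ and the rank-$1$ tensor $P=c\,p_1\otimes p_2\otimes p_4\otimes\cdots\otimes p_k$. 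From the decomposition side, both $U$ and $P$ lie in the pencil $\langle T_1,T_2\rangle$ with $T_i=\pi_{\neq 3}(x_i)$ rank $1$; inverting the $2\times 2$ system then expresses each $T_i$ as a non-trivial combination of $U$ and $P$, which must however have factor-$j$ flattening image $\langle u_j,p_j\rangle$ of dimension $2$ for every $j\geq 4$, contradicting $r(T_i)=1$. The degenerate sub-cases (a coefficient of $U$ or $P$ vanishes) reduce either to $\pi_3(x_1)=\pi_3(x_2)$ (incompatible with the rank-$2$ factor-$3$ flattening of $q$) or to a rank-$1$ expression for the rank-$2$ tensor $U$, both absurd. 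Hence $r(q)=3$.

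For the uniqueness half of item (2), let $E\in\mathcal{S}(Y_{n_1,\ldots,n_k},q)$. It suffices to show $p\in E$, for then $E=\{p\}\cup E'$ with $E'$ decomposing $q-c\nu(p)=q'$, and autarky of $q'$ places $E'\subset Y'$, giving $E'\in\mathcal{S}(Y',q')$. For $k=3$ this is precisely Lemma \ref{k=3unico}. For $k\geq 4$ I set $S=E\cup A\cup\{p\}$ and, for each $i\geq 3$, introduce the multi-hypersurface $H_i=\{\pi_i=u_i\}$ of multi-degree $(0,\ldots,0,1,0,\ldots,0)$, which contains $A$ but not $p$ by item (1). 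Assuming $p\notin E$, one iterates the residue construction of Lemma \ref{k=3unico} through $H_3,\ldots,H_k$: at each stage, the $h^1$-vanishing estimate of \cite[Lemma 1.13]{BBS} applied to $\Res_{H_i}(S)$ either forces $E\subset H_i$ (incompatible with the rank-$2$ factor-$i$ flattening of $q$) or produces a direct cohomological contradiction. This is the main obstacle of the proof: the single-hyperplane analysis of Lemma \ref{k=3unico} must be generalised to $k-2$ hyperplanes, requiring careful tracking of $S\cap\bigcup_{i\geq 3} H_i$ and the iterated residues, together with verification that the cohomological vanishings of \cite[Lemma 1.13]{BBS} apply consistently at each step.
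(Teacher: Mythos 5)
Your items (1) and (3), the reduction $r(q)\le 3$, and the exclusion of $r(q)\le 2$ via flattenings and the dual-functional contraction on the third factor are sound, and are in fact more explicit than the paper, which delegates these points to the original proof of \cite[Proposition 3.10]{BBS}. The $k=3$ case is correctly deferred to Lemma \ref{k=3unico}.

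The genuine gap is in the uniqueness half of item \ref{f2} for $k\geq 4$, which is the heart of the proposition, and you acknowledge it yourself: you propose to iterate the residue/Horace argument of Lemma \ref{k=3unico} through the $k-2$ divisors $H_3,\dots,H_k$ but do not carry it out, and the assertion that at each stage \cite[Lemma 1.13]{BBS} ``either forces $E\subset H_i$ or produces a direct cohomological contradiction'' is exactly the content that must be proved --- Lemma \ref{k=3unico} shows that even for a single divisor this requires a delicate case analysis (auxiliary divisors $M$, $D$, the dichotomy $\pi_2(p)\in\pi_2(E)$ or not), and there is no reason to expect the multi-divisor version to close without comparable work at every step. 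The paper takes a different and much shorter route: it inherits the induction on the number of factors from steps (B)--(D) of the original proof of \cite[Proposition 3.10]{BBS}, and only repairs the base case, deducing the case $\mathbb{P}^2\times\mathbb{P}^1\times\mathbb{P}^1\times\mathbb{P}^1$ from the all-lines case $(\mathbb{P}^1)^4$ by linear projection from a general point $u\in\mathbb{P}^2$ and the corresponding projection from $\Lambda=\nu(\{u\}\times\mathbb{P}^1\times\mathbb{P}^1\times\mathbb{P}^1)$. A secondary, fixable gap: your reduction ``it suffices to show $p\in E$'' is incomplete as stated, since writing $E=\{p\}\cup E'$ gives $q=\gamma\nu(p)+\sum\delta_i\nu(e_i)$ and one must still argue that $\gamma$ equals the coefficient $c$ in $q=q'+c\nu(p)$ before concluding that $E'$ decomposes $q'$ (autarky applies to rank decompositions of $q'$, not to arbitrary three-term expressions for it); note that Lemma \ref{k=3unico} avoids this by proving directly that $E=B\cup\{p\}$ with $B\in\mathcal{S}(Y',q')$.
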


\begin{proof}
The proof of \cite[Proposition 3.10]{BBS} is splitted in two cases depending on whether $Y_{n_1,\dots,n_k}$ is made by all projective lines or not and both cases are worked out by induction. If $(n_1,\dots,n_k)=(1,\dots,1)$ the induction is contained in steps (B) and (C) of the proof of \cite[Proposition 3.10]{BBS} and they are not altered by the new statement. If instead $Y_{n_1,\dots,n_k}$ contains at least one projective plane, then we need to use $Y_{n_1,n_2,n_3,n_4}=\mathbb{P}^2\times \mathbb{P}^1\times \mathbb{P}^1\times \mathbb{P}^1$ nstead of $\mathbb{P}^2\times \mathbb{P}^1\times \mathbb{P}^1$ as base of the induction for which step (D) will then act as the inductive step.
	Case $\mathbb{P}^2\times \mathbb{P}^1\times \mathbb{P}^1\times \mathbb{P}^1$ follows from the case $\mathbb{P}^1\times \mathbb{P}^1\times \mathbb{P}^1\times \mathbb{P}^1$ proved in step (C) as follow. Consider a general $u\in \mathbb{P}^2$ and the linear projection $\mathbb{P}^2\setminus \{u\}\rightarrow  \mathbb{P}^1$. Construct
	the associate morphism $(\mathbb{P}^2\setminus \{u\})\times \mathbb{P}^1\times \mathbb{P}^1\times \mathbb{P}^1\to  \mathbb{P}^1\times \mathbb{P}^1\times \mathbb{P}^1\times \mathbb{P}^1$ and consider the projection from $\Lambda = \nu (\{u\}\times \mathbb{P}^1\times \mathbb{P}^1\times \mathbb{P}^1)$
	as in step (D).
This covers the proof of Proposition \ref{prop:new statement} for the case $k\geq 4$. Since case $k=3$ is completely covered by Lemma \ref{k=3unico} this concludes the proof of the statement.
\end{proof}

\begin{remark}
	The only statement in the rest of \cite{BBS} citing \cite[Proposition 3.10]{BBS} is Proposition 5.1 but the result is not altered using the revised Proposition \ref{prop:new statement}.
\end{remark}

With the above result we completely covered Proposition \ref{prop:new statement}. Now Theorem \ref{newMAINTHM} is completely fixed but for the sake of completeness let us show that the case $(n_1,n_2,n_3)=(2,1,1)$ fits only inside items \ref{3.} and \ref{4.}. 
 In this case the corresponding tensor space $\mathbb{P}(\mathbb{C}^3\otimes \mathbb{C}^2\otimes \mathbb{C}^2)$ has a finite number of orbits with respect to the action of $\mathrm{Aut}(\mathbb{P}^2)\times \mathrm{Aut}(\mathbb{P}^1)\times \mathrm{Aut}(\mathbb{P}^1)$ (cf. \cite{Par}, also  \cite[Table 1]{BL}) and there are only two possibilities for a concise rank-3 tensor, namely cases 7 and 8 of \cite[Table 1]{BL}. We already proved in Corollary \ref{diffcases_pencil=} that case 7 corresponds to \cite[Example 3.7]{BBS}, while in Corollary  \ref{cor:caso8bl} we saw that case 8 corresponds to \cite[Example 3.6]{BBS}. 
 
 We see now how to distiguish these two cases in a more geometrical way. 
\smallskip

Let $q$ be a rank-3 tensor in $\langle X_{2,1,1} \rangle $ and fix a solution $A\in \mathcal{S}(Y_{2,1,1},q)$.  Since $\#A=3$ and $h^0(\mathcal{O}_{Y_{2,1,1}}(0,1,1))$ $=4$, there is $G\in |\mathcal{I}_A(0,1,1)|$. The strength of the next claim is that we can prove that in this particular instance the space of solutions of $q$ with respect to $Y_{2,1,1}$ coincides with the space of solutions of $q$ with respect to $G$.
\begin{claim}\label{claim:sp_sol_coincide} Let $q\in \langle X_{2,1,1} \rangle $. Every $B\in \mathcal{S}(Y_{2,1,1},q)$ is contained in G and hence $\mathcal{S}(Y_{2,1,1},q)=\mathcal{S}(G,q) $.
\end{claim}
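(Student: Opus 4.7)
The plan is to take an arbitrary $B\in\mathcal{S}(Y_{2,1,1},q)$ with $B\ne A$ and show $B\subset G$ by a residue argument on $S:=A\cup B$, closing with \cite[Lemma 1.13]{BBS} exactly as in the proof of Lemma \ref{k=3unico}. The argument needs two cohomological inputs, after which the lemma should force $S\subset G$ (and hence $B\subset G$).

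The first input is $h^{1}(\mathcal{I}_S(1,1,1))>0$. Since $q\in\langle\nu(A)\rangle\cap\langle\nu(B)\rangle$, the two projective $2$-planes $\langle\nu(A)\rangle$ and $\langle\nu(B)\rangle$ share at least the point $q$, so $\dim\langle\nu(S)\rangle\le 4$. I would then do a short case analysis on $|A\cap B|\in\{0,1,2\}$: when $|A\cap B|=2$, the rank-$3$ hypothesis on $q$ prevents $q$ from lying on the line through the two shared images, which forces both $2$-planes to coincide and makes $\nu(S)$ a set of four points in a $\mathbb{P}^2$; in every case one obtains $\dim\langle\nu(S)\rangle+1<\#S$, which is exactly $h^{1}(\mathcal{I}_S(1,1,1))>0$.

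The second input is $h^{1}(\mathcal{I}_{\operatorname{Res}_G(S)}(1,0,0))=0$. Since $A\subset G$ by the choice of $G$, the residue satisfies $\operatorname{Res}_G(S)=S\setminus(S\cap G)\subseteq B$. By autarky (Lemma \ref{concision}), the projection $\pi_1(B)$ must span $\mathbb{P}^2$: otherwise $B$, and therefore $q$, would lie in a multiprojective subspace strictly smaller than $Y_{2,1,1}$, contradicting the conciseness of $q$. Hence any subset of $B$ projects to at most three linearly independent points in $\mathbb{P}^2$ and imposes independent conditions on $|\mathcal{O}(1,0,0)|$, yielding the vanishing.

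Feeding these inputs into the residue exact sequence
\begin{equation*}
0\to\mathcal{I}_{\operatorname{Res}_G(S)}(1,0,0)\to\mathcal{I}_S(1,1,1)\to\mathcal{I}_{S\cap G,G}(1,1,1)\to 0
\end{equation*}
and invoking \cite[Lemma 1.13]{BBS} should give $S\subset G$, whence $B\subset G$; the identity $\mathcal{S}(Y_{2,1,1},q)=\mathcal{S}(G,q)$ is then immediate since $\langle\nu(G)\rangle\subseteq\langle\nu(Y_{2,1,1})\rangle$. The main obstacle is the dichotomy built into \cite[Lemma 1.13]{BBS}: as in the proof of Lemma \ref{k=3unico}, the lemma by itself concludes "$S\subset G$ or a small-residue exceptional configuration", and killing the exceptional branch should require a further residue step—choosing a second divisor through the putative points of $S\setminus G$ and re-applying autarky—together with a separate treatment of the reducible case in which $G$ splits as $(\mathbb{P}^2\times L)\cup(\mathbb{P}^2\times L')$ for two rulings of $\mathbb{P}^1\times\mathbb{P}^1$, where each component must be handled individually.
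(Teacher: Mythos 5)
Your approach is the paper's: the paper also sets $S:=A\cup B$, uses autarky/conciseness to get $\langle\pi_1(B)\rangle=\mathbb{P}^2$ and hence $h^1(\mathcal{I}_{S\setminus S\cap G}(1,0,0))=0$ (since $S\setminus S\cap G\subseteq B$), and then applies \cite[Lemma 1.13]{BBS}. Your second cohomological input is exactly the paper's one step; your first input ($h^1(\mathcal{I}_S(1,1,1))>0$) is correct but not spelled out in the paper, since it is just the statement that $A$ and $B$ are two distinct solutions of the same $q$, which is already the standing hypothesis of that lemma. Where you go astray is the closing hedge: the conclusion of \cite[Lemma 1.13]{BBS}, as it is used throughout this paper, is the equality $A\setminus(A\cap G)=B\setminus(B\cap G)$ once the residual $h^1$ vanishes, and since $A\subset G$ the left-hand side is empty, giving $B\subset G$ outright. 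There is no surviving ``exceptional branch'' to kill, no second residue step, and no need to treat reducible $G$ separately --- the reducibility of $G$ only enters the subsequent discussion distinguishing cases \ref{3.} and \ref{4.}, not the Claim itself. So your argument is complete once you read the lemma correctly; the extra machinery in your last paragraph can be deleted.
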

\begin{proof}
 Fix $B\in \mathcal{S}(Y_{2,1,1},q)$. Since $A\subset G$ the statement for $B=A$ is trivial, so let us assume $B\ne A$ and set $S:= A\cup B$.  Since $Y_{2,1,1}$ is the minimal multiprojective space containing $B$ then $\langle \pi_1(B)\rangle =\mathbb{P}^2$ and hence $h^1(\mathcal{I} _B(1,0,0)) =0$. Moreover, notice that $S\setminus S\cap G\subseteq B$ and therefore we have that $h^1(\mathcal{I} _{S\setminus S\cap G}(1,0,0))=0$.
Thus by \cite[Lemma 1.13]{BBS} we have that $B\subset G$.
\end{proof}
	Every $G\in |\mathcal{O}_{Y_{2,1,1}}(0,1,1)|$ is of the form $G =\mathbb{P}^2\times C$ for some $C\in |\mathcal{O}_{\mathbb{P}^1\times \mathbb{P}^1}(1,1)|$ and viceversa. Since $C$ is a hyperplane section of a smooth quadric in the Segre embedding of the last two factors $\mathbb{P}^1\times \mathbb{P}^1$ of $Y_{2,1,1}$ then either $C$ is a smooth conic or $C=L\cup R$ with $L\in |\mathcal{O}_{\mathbb{P}^1\times \mathbb{P}^1}(1,0)|$, $R\in |\mathcal{O}_{\mathbb{P}^1\times \mathbb{P}^1}(0,1)|$ and $L\cap R$ is a unique point $o\in \mathbb{P}^1\times \mathbb{P}^1$.
Let us distinguish two cases depending on wether $G$ is irreducible or not.
\begin{enumerate}
	\item Fix a solution $A$ such that $G$ is irreducible, i.e. assume that $C$ is irreducible and hence smooth. Let $u_i:\mathbb{P}^1\times \mathbb{P}^1\to \mathbb{P}^1$ for $i=1,2$ denote the projection from the last two factors of $Y_{2,1,1}$ onto the second and third factor of $Y_{2,1,1}$ respectively. Note that each $u_{i|C}: C\to \mathbb{P}^1$ has degree $1$ and hence it is an isomorphism. Claim \ref{claim:sp_sol_coincide} shows that $\#\pi _2(B)=\#\pi _3(B)=3$ for all $B\in \mathcal{S}(Y_{2,1,1},q)$. Taking as $A$ the union of $3$ general points of $Y_{2,1,1}$ we see that this case occurs. Moreover, the open orbit of $\sigma_3(X_{2,1,1})$ arises here and by Claim \ref{claim:sp_sol_coincide} this is the only case in which we fall in this orbit. 
	
	The case just described is \cite[Example 3.6]{BBS} with the additional observation that $\mathcal{S}(Y_{2,1,1},q)=\mathcal{S}(G,q)$.
	
	\item Fix $A$ such that $G$ is reducible and write $G=G_1\cup G_2$ with $G_1=\mathbb{P}^2\times L$, $G_2 =\mathbb{P}^2\times R$ and $G_1\cap G_2 = \mathbb{P}^2\times \{o\}$. This case is precisely the case described in \cite[Example 3.7 and Proposition 3.5]{BBS} with the additional information that $\mathcal{S}(Y_{2,1,1},q) =\mathcal{S}(G,q)$. Since $Y_{2,1,1}$ is the minimal multiprojective space containing $A$ then $A\nsubseteq G_1$ and $A\nsubseteq G_2$. We have $\#(A\cap (G_1\cap G_2))\le 1$ and $1 \le \#(A\cap G_i)\le 2$ for $i=1,2$, and moreover $$\#(G_1\cap A)+\#(G_2\cap A)=3+\#(A\cap G_1\cap G_2).$$  
	  Notice that both $\pi _2(G_1\cap A)$ and $\pi _3(G_2\cap A)$ is a single point and hence at least one $i\in \{2,3\}$ has $\#\pi _i(A)=2$.
	  Let us treat the two cases separately.
	  \begin{itemize}
	\item The case $\#\pi _2(A)=\#\pi_3(A)=2$ occurs if and only if $A\cap G_1\cap G_2\ne \emptyset$, i.e. if and only if the projection of $A$ in the last two factors contains $\{o\}=L\cap R$. To fix the ideas denote by $A=\{a,b,c\}$, with $a,b\in G_1$ and $c \in G_2$ and set $L=\{o_L\}\times \mathbb{P}^1$, $R=\mathbb{P}^1\times \{o_R\}$, where $o=(o_L,o_R)$.
	  In this case $\#\pi_2(A)=\#\pi_3(A)=2$ where $\{o_L\}\in \pi_2(A)$ and $\{o_R\}\in \pi_3(A)$ and either $a$ is of the form $(\pi_1(a), o_L,o_R)$ or $b$ is of the form $(\pi_1(b),o_L,o_R)$.
	   \item Taking as $A$ a general union of two general points of $G_1$ and a point of $G_2$ (or viceversa), we see that also the case $\#\pi_2(A)=2$ and $\#\pi _3(A) =3$ (or $\#\pi_2(A)=2$ and $\#\pi _3(A) =3$) occurs.
	Thus $\mathcal{S}(Y_{2,1,1},q)$ has precisely $2$ irreducible components, as observed in \cite[Example 3.7]{BBS}, and the dimension of the space of solution is $\dim \mathcal{S}(Y_{2,1,1},q)=4$.
	\end{itemize}
\end{enumerate}
From the above discussion we see that a single $A\in \mathcal{S}(Y_{2,1,1},q)$ is sufficient to know if $q$ is in the open orbit of $\sigma_3(X_{2,1,1})$ of case $8$ of \cite[Table 1]{BL} or in the smaller orbit of case $7$ of \cite[Table 1]{BL}.

For the sake of clarity we conclude by summing up the above discussion in the following statement.
\begin{proposition}
Let $q\in \langle X_{2,1,1}\rangle$ be a concise rank-3 tensor and fix a solution $A\in \mathcal{S}(Y_{2,1,1},q)$. Then there exists $G\in \vert \mathcal{O}_{Y_{2,1,1}}(0,1,1) \vert$ such that $\mathcal{S}(Y_{2,1,1})=\mathcal{S}(G,q)$ and either $G$ is irreducible, $\dim(\mathcal{S}(Y_{2,1,1},q))=3$ and $q$ is as in \cite[Example 3.6]{BBS}, or $G$ is reducible, $\dim(\mathcal{S}(Y_{2,1,1},q))=4$ and $q$ is as in \cite[Example 3.7]{BBS}.
\end{proposition}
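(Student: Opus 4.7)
The plan is to combine the observations that immediately precede the statement into a clean dichotomy driven by the reducibility of a single divisor $G$.

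First I would produce $G$: since $\#A=3$ and $h^0(\mathcal{O}_{Y_{2,1,1}}(0,1,1))=4$, the linear system $|\mathcal{I}_A(0,1,1)|$ is non-empty, so I pick any $G\in|\mathcal{I}_A(0,1,1)|$. Next I invoke Claim \ref{claim:sp_sol_coincide} essentially verbatim to conclude $\mathcal{S}(Y_{2,1,1},q)=\mathcal{S}(G,q)$: for any other solution $B$, concision forces $h^1(\mathcal{I}_B(1,0,0))=0$, hence the residue argument via \cite[Lemma 1.13]{BBS} drops $B$ inside $G$. This gives the first assertion of the proposition independently of which case we are in.

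Then I split on the shape of $G$. Writing $G=\mathbb{P}^2\times C$ with $C\in|\mathcal{O}_{\mathbb{P}^1\times\mathbb{P}^1}(1,1)|$, either $C$ is a smooth conic or $C=L\cup R$ is a pair of rulings meeting at a point $o$. In the irreducible case, the projections $u_i\colon C\to \mathbb{P}^1$ ($i=1,2$) are isomorphisms, so every $B\in\mathcal{S}(G,q)$ satisfies $\#\pi_2(B)=\#\pi_3(B)=3$; this is exactly the configuration of \cite[Example 3.6]{BBS}, landing $q$ in the open orbit of $\sigma_3(X_{2,1,1})$ and giving $\dim\mathcal{S}(Y_{2,1,1},q)=3$. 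In the reducible case, I decompose $A\cap G_i$ for $i=1,2$ under the constraints $1\le \#(A\cap G_i)\le 2$ and $\#(A\cap G_1)+\#(A\cap G_2)=3+\#(A\cap G_1\cap G_2)$ (coming from autarky of $Y_{2,1,1}$) to obtain precisely the two sub-configurations described in \cite[Example 3.7]{BBS}, yielding two irreducible components of the solution scheme and $\dim\mathcal{S}(Y_{2,1,1},q)=4$.

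The step that needs the most care is showing that the dichotomy is exclusive and exhaustive and that the dimensions are indeed $3$ and $4$: this requires the orbit-classification input from \cite[Table 1]{BL} (cases $7$ and $8$) together with Corollaries \ref{diffcases_pencil=} and \ref{cor:caso8bl} to identify orbits with Examples 3.6 and 3.7 of \cite{BBS}. All dimensional computations are then reduced to parameter counts on $G$: the $3$-dimensional family of unordered triples on a smooth conic $C$ in the irreducible case, and the two $4$-dimensional families obtained by distributing points on the pair of rulings in the reducible case. The main obstacle is making sure that the dimension count in the reducible case truly yields two distinct irreducible components of $\mathcal{S}(G,q)$ of dimension $4$ each (and not a larger ambient family), but this is controlled precisely by the constraint that a unique point of $A$ may lie on $G_1\cap G_2$, which I would verify by a direct case analysis on $\#\pi_2(A)$ and $\#\pi_3(A)$.
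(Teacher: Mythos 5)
Your proposal is correct and follows essentially the same route as the paper: the paper's ``proof'' of this proposition is precisely the preceding discussion, namely producing $G\in|\mathcal{I}_A(0,1,1)|$ from the count $h^0(\mathcal{O}_{Y_{2,1,1}}(0,1,1))=4>3=\#A$, invoking Claim \ref{claim:sp_sol_coincide} to get $\mathcal{S}(Y_{2,1,1},q)=\mathcal{S}(G,q)$, and then splitting on whether $C$ (with $G=\mathbb{P}^2\times C$) is a smooth conic or a pair of rulings, matching the two cases to \cite[Examples 3.6 and 3.7]{BBS} via the orbit classification of \cite[Table 1]{BL} and Corollaries \ref{diffcases_pencil=} and \ref{cor:caso8bl}. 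The only cosmetic difference is that the identity $\#(G_1\cap A)+\#(G_2\cap A)=3+\#(A\cap G_1\cap G_2)$ is just inclusion--exclusion rather than a consequence of autarky (autarky is only needed to rule out $A\subseteq G_i$), but this does not affect the argument.
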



\bibliographystyle{alpha}
\bibliography{bibliografia}

\end{document}